\newcommand{\set}[1]{\left\{#1\right\}}
\newcommand{\eps}{\varepsilon}
\renewcommand{\epsilon}{\varepsilon}
\renewcommand{\geq}{\geqslant}
\newtheorem{thm}{Theorem}[section]
\newtheorem*{thm*}{Theorem}
\newtheorem*{lem*}{Lemma}
\newtheorem*{cor*}{Corollary}
\newtheorem*{prop*}{Proposition}
\newtheorem{fct}[thm]{Fact}
\newtheorem*{fct*}{Fact}
\newtheorem{conj}[thm]{Conjecture}
\newtheorem*{conj*}{Conjecture}
\theoremstyle{definition}
\newtheorem*{defn*}{Definition}
\theoremstyle{remark}
\newtheorem*{rem*}{Remark}
\newtheorem*{example*}{Example}
\newtheorem*{que*}{Question}
\title{Empirical approach to the $\times 2,\times 3$ conjecture}
\author{Tomasz Downarowicz, Dawid Huczek}
\affil{\textit{Faculty of Pure and Applied Mathematics,
        Wroc{\l}aw University of Science and Technology, 
        Wybrze\.{z}e Wyspia\'{n}skiego 27,
        50-370 Wroc{\l}aw, Poland}\\
    \texttt{Tomasz.Downarowicz@pwr.edu.pl, Dawid.Huczek@pwr.edu.pl}}
\begin{document}

\maketitle

\begin{abstract}
We study atomic measures on $[0,1]$ which are invariant both under multiplication by $2\mod 1$ and by $3\mod 1$, since such measures play an important role in deciding Furstenberg's $\times 2, \times 3$ conjecture. Our specific focus was finding atomic measures whose supports are far from being uniformly distributed, and we used computer software to discover a number of such measures (which we call \emph{outlier measures}). The structure of these measures indicates the possibility that a sequence of atomic measures may converge to a non-Lebesgue measure; likely one which is a combination of the Lebesgue measure and one or more atomic measures. 
\end{abstract}

\section{Introduction}
Furstenberg's $\times 2, \times 3$ conjecture remains one of the major unsolved problems in the field of ergodic theory in dynamical systems. Originally posed in \cite{F}, it deals with the question of ergodic Borel probability measures on the circle, represented as the interval $I=[0,1]$ with the endpoints collapsed to one point, invariant under \emph{both} the actions $T(x)=2x\mod 1$ and $S(x)=3x \mod 1$.\footnote{The original conjecture deals more generally with $T(x)=px\mod 1$ and $S(x)=qx\mod1$ with $p,q$ being a pair of coprime natural numbers; the pair $(2,3)$ is the simplest such pair.} The Lebesgue measure $\lambda$ is obviously $T$ and $S$ invariant, and so is the point mass at $0$. There are also infinitely many finite orbits, minimal under the joint action of $T$ and $S$. Every such orbit consists of rational numbers with a denominator coprime with both $2$ and $3$ and each of them supports an atomic invariant measure---the normalized counting measure on the orbit. The open question is: are there any Borel measures on $I$ which are both $T$ and $S$ invariant, other than convex combinations of $\lambda$ with some atomic measures described above? An equivalent formulation reads: are there any continuous measures, invariant under both $T$ and $S$, and singular with respect to the Lebesgue measure $\lambda$? The original conjecture by Furstenberg is that the answer is negative, but a definite answer has not been reached so far. There are two major results in this direction; the first, proven by Furstenberg in the original paper \cite{F}, states that any nonatomic measure for the action of $T$ and $S$ has full support in $[0,1]$. Another result, due to Rudolph \cite{R}, states that for any Borel probability measure on $I$ which is both $T$ and $S$ invariant, and singular with respect to the Lebesgue measure, the actions of $T$ and $S$ both have entropy zero. Since then there has been no significant progress towards answering the question. Some mathematicians believe the conjecture is true, we are inclined to give no preference to any answer.

We believe that some valuable information can be acquired from studying the atomic measures supported by minimal finite orbits. There is a chance that a measure defying the conjecture exists among the accumulation points of such atomic measures. To the best of our knowledge, the structure of atomic measures has not been extensively investigated to date. In particular, it is unknown whether, with increasing number of atoms, the atomic measures must converge to the Lebesgue measure $\lambda$. This problem is weaker than the main Furstenberg conjecture, because a priori atomic measures could accumulate at an atomic measure or at a convex combination of $\lambda$ with some atomic measures, without defying the main conjecture. Nevertheless, it is an open question worth addressing. 
As a step in that direction, we have computationally generated a huge collection of atomic measures with large number of atoms (or more precisely, finite minimal orbits of large cardinality), focusing on any hints that the set of corresponding normalized counting measures might contain sequences converging to limits other than the Lebesgue measure $\lambda$. Interestingly, we have discovered a number of atomic invariant measures with many atoms, whose supports do not appear to be (approximately) uniformly distributed. We have also found out that it is very likely that such measures may converge to convex combinations of atomic measures possibly with some nonatomic part. The motivation for our experimental adventure was the hope to obtain some indication as to the existing types of accumulation points of the set of the atomic measures, and perhaps to shift the likelihood of either solution of Furstenberg's problem.

\section{Atomic invariant measures}
Throughout this and the following section we will consider the dynamical system consisting of the space $I=[0,1]$ with the identification $0=1$, under the $\mathbb Z^2$-action generated by two commuting transformations $T$ and $S$, defined by $T(x)=2x\mod 1$ and $S(x)=3x\mod 1$. In the subsequent discussion, whenever we use the word ``orbit'', or when we refer to concepts of invariance, minimality or ergodicity, we implicitly understand these concepts with respect to the above described $\mathbb Z^2$-action. Similarly, whenever we refer to ``atomic measures'', we refer exclusively to ergodic atomic measures (supported by minimal finite orbits). Any atomic measure is supported by the orbit of a single element $x\in[0,1]$, that is to say by the (finite) set of all numbers of the form $T^iS^j(x)$ for $i,j\in\mathbb{N}$ (note that the actions $S$ and $T$ commute, therefore the order of application does not matter), where $x$ is rational with a denominator $n$ not divisible by neither $2$ nor $3$. Conversely, the orbit of any such rational point is finite, minimal and supports an atomic measure.

The following questions can be asked, progressively:
\begin{enumerate}
\item\label{Q1} Can a sequence of atomic measures converge (in the weak-star topology) to a limit measure different from the Lebesgue measure?
\item\label{Q2} If yes, can this limit measure have any atoms?
\item\label{Q3} If yes, may the limit measure have a nontrivial continuous part?
\item\label{Q4} If yes, can the continuous part be different from the Lebesgue measure?
\end{enumerate}
Of course, positive answer to the last question would defy Furstenberg's conjecture and it is highly unlikely to be within reach. But even obtaining an answer to any of the questions (1)-(3) would be a remarkable progress on this subject.

We pose two other, independent questions:
\begin{enumerate}
\item[5.]\label{Q5} Does the closure of the set of atomic measures contain all ergodic measures?
\item[6.]\label{Q6} Is there a subset $K$ of atomic measures whose closure contains the 
convex hull of $K$?
\end{enumerate}
If Furstenberg's conjecture is true then the answer to question 5 is positive, while the converse implication does not hold. Nevertheless, even proving the positive answer seems out of reach. The last property is of special interest. If it held, then the closure of $K$ would be a closed convex subset with dense set of ergodic measures which are extreme in $K$. Although other extreme points of this subset need not be ergodic measures, nevertheless the existence of such a subset would strongly indicate that there are uncountably many ergodic measures.

Our main aim was to investigate questions 1--4, i.e. focus on the possible types of accumulation points of the set of atomic measures. Since any atomic measure is uniquely identified by its support, our direct object of study were the finite minimal orbits of the action generated by $S$ and $T$. We defer the detailed presentation of numeric results to the next section; here we will present a more theoretical discussion and general conclusions from our observations.

First of all, it is easy to see than any orbit has to be the set of numbers of the form $\frac{i}{n}$ where $n$ is a number relatively prime to both $2$ and $3$, and that $T$ and $S$ are both one-to-one on every orbit. This is why the only invariant measure supported by such an orbit is the normalized counting measure. We denote by $F_n$ the set of all proper fractions with denominator $n$, i.e., we let $F_n=\set{\frac{i}{n}:0<i<n}$. In many cases the orbit will be simply the set $F_n$, for example $F_5=\set{\frac{1}{5},\frac{2}{5},\frac{3}{5},\frac{4}{5}}$ is invariant under $T$ and $S$ and has no invariant subsets. If $n$ is not prime, then $F_n$ contains invariant sets $F_m$ for all divisors $m$ of $n$, and some (essential) remaining part $F_n'$. For example $F_{25}$ decomposes into the following orbits:
\[
O_1=\left\{\frac{1}{5},\frac{2}{5},\frac{3}{5},\frac{4}{5}\right\}.
\]
\[\begin{split}
O_2=\bigg\{&\frac{1}{25},\frac{2}{25},\frac{3}{25},\frac{4}{25},\frac{6}{25},\frac{7}{25},\frac{8}{25},\frac{9}{25},\frac{11}{25},\frac{12}{25},\\
&\frac{13}{25},\frac{14}{25},\frac{16}{25},\frac{17}{25},\frac{18}{25},\frac{19}{25},\frac{21}{25},\frac{22}{25},\frac{23}{25},\frac{24}{25}\bigg\},
\end{split}
\]

Because the sets $F_m$ are (nearly) uniformly distributed, we agree that $F_n'$ is also nearly uniformly distributed. That is to say, without further splitting into invariant sets, the normalized counting measures on the sets $F'_n$ do converge to the Lebesgue measure $\lambda$. However, $F_n'$ can split into a disjoint union of several orbits, and these orbits may reveal significant deviation from the uniform distribution. This may happen even if $n$ is a prime number (i.e., when $F_n'=F_n$).
The smallest such example is $n=23$, for which we obtain two orbits:
\[O_1=\set{\frac{1}{23},\frac{2}{23},\frac{3}{23},\frac{4}{23},\frac{6}{23},\frac{8}{23},\frac{9}{23},\frac{12}{23},\frac{13}{23},\frac{16}{23},\frac{18}{23}}\]
\[O_2=\set{\frac{5}{23},\frac{7}{23},\frac{10}{23},\frac{11}{23},\frac{14}{23},\frac{15}{23},\frac{17}{23},\frac{19}{23},\frac{20}{23},\frac{21}{23},\frac{22}{23}}\]
The sets $O_1$ and $O_2$ are both invariant under the actions of $T$ and $S$, therefore each of them supports an ergodic measure. These measures deviate from being (approximately) uniformly distributed: the one supported on $O_1$ is biased to the left, the one supported on $O_2$ is biased to the right. Also note the easy observation that $O_2$ consists of the numbers of the form $1-x$ where $x$ ranges over the elements of $O_1$. In general, if $O$ is an orbit under $T$ and $S$, then either an orbit is itself invariant under the transformation $f(x)=1-x$, or $f(O)$ is another orbit, disjoint with $O$. As $n$ increases, we observe that for some values of $n$ (not divisible by 2 or 3), the sets $F_n'$ decompose into an increasing number of orbits, even when $n$ is prime. 

The fact that $F_n'$ can be an union of several orbits indicates that there is at least the possibility that infinitely many of these orbits have normalized counting measures lying persistently far from $\lambda$, leading to non-Lebesque limit measures. By necessity however such measures are ``rare'', regardless of whether the Furstenberg conjecture is true:

\begin{thm}\label{thm:feworbits}
    For any $\delta>0$ and $\eps>0$ there exists an $N$ such that for every $n>N$, if $F_n$ decomposes into orbits $O_1,\ldots,O_k$, then  the cardinality of the union of the orbits corresponding to measures distant from the Lebesgue measure by at least $\delta$ is less than $\eps n$.
\end{thm}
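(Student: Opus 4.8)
The plan is to argue by contradiction. If the statement fails, then there are a fixed $\delta>0$, a fixed $\eps>0$ and arbitrarily large $n$ for which the union $B$ of the ``$\delta$-bad'' orbits in $F_n$ (those orbits $O$ with $d(\mu_O,\lambda)\geq\delta$, where $\mu_O$ is the normalized counting measure on $O$) satisfies $|B|\geq\eps n$. The only external input I need is that the uniform measures $\mu_n=\frac1{n-1}\sum_{i=1}^{n-1}\delta_{i/n}$ on $F_n$ converge weak-$*$ to $\lambda$ (Riemann sums for continuous functions). It is convenient to fix, once and for all, a metric $d$ for the weak-$*$ topology on probability measures whose balls are convex, e.g. $d(\mu,\nu)=\sum_{k\geq 1}2^{-k}\bigl(|\hat\mu(k)-\hat\nu(k)|+|\hat\mu(-k)-\hat\nu(-k)|\bigr)$; each $\mu\mapsto\hat\mu(k)$ is affine, so $d(\cdot,\nu)$ is convex, and by compactness of the space of probability measures one may phrase ``distant'' in terms of $d$ without loss of generality.

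The key difficulty is that one cannot work with $\mu_n\to\lambda$ alone: a convex combination of orbit measures that are individually far from $\lambda$ may nonetheless approximate $\lambda$ (precisely the left-biased/right-biased pairing observed at $n=23$), so the bad orbits could a priori carry a positive fraction of $F_n$ while averaging out to $\lambda$. To rule this out, cover the weak-$*$ compact set $\{\mu:d(\mu,\lambda)\geq\delta\}$ by finitely many $d$-balls $B(\mu_1,\delta/3),\dots,B(\mu_r,\delta/3)$ with $d(\mu_i,\lambda)\geq2\delta/3$, and partition the bad orbits of $F_n$ according to which ball contains $\mu_O$: this yields a partition $B=B^{(1)}\sqcup\cdots\sqcup B^{(r)}$ into unions of whole orbits. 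Then $\max_i|B^{(i)}|\geq|B|/r\geq(\eps/r)n$, and along a subsequence of the offending $n$'s the maximizing index is a fixed $i_0$. Write $C:=B^{(i_0)}$ and $\nu:=\frac1{|C|}\sum_{x\in C}\delta_x=\sum_{O\subseteq C}\frac{|O|}{|C|}\mu_O$. Since $C$ is a union of orbits, $\nu$ is $T$- and $S$-invariant; since $\nu$ lies in the convex hull of $\overline{B(\mu_{i_0},\delta/3)}$, convexity of $d$-balls gives $d(\nu,\lambda)\geq d(\mu_{i_0},\lambda)-\delta/3\geq\delta/3$. Thus $\nu$ is an invariant measure that both has density $\geq\eps/r$ in $F_n$ and stays a definite distance from $\lambda$ — cancellation has been circumvented.

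The final (and, I expect, conceptually central) step is a rigidity argument. From $\nu\leq\frac{n-1}{|C|}\mu_n\leq\frac r\eps\,\mu_n$ and $\mu_n\to\lambda$, any weak-$*$ limit $\nu_\infty$ of the $\nu$'s (along a further subsequence) satisfies $\nu_\infty\leq\frac r\eps\,\lambda$, hence $\nu_\infty=g\,d\lambda$ with $0\leq g\leq r/\eps$; moreover $\nu_\infty$ is $T$- and $S$-invariant because $T,S$ are continuous on $I$ and the $\nu$'s are invariant. Testing invariance against the characters $e_k(x)=e^{2\pi ikx}$ gives $\hat g(2k)=\hat g(k)$ and $\hat g(3k)=\hat g(k)$ for all $k$, hence $\hat g(k)=\hat g(2^a3^bk)$ for all $a,b\geq 0$; for $k\neq 0$ the indices $2^a3^bk$ are pairwise distinct, so Parseval (using $g\in L^\infty\subseteq L^2$) forces $\hat g(k)=0$, whence $g$ is constant and, being a probability density, $g\equiv 1$ and $\nu_\infty=\lambda$. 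This contradicts $d(\nu,\lambda)\geq\delta/3$, proving the theorem. The genuinely non-routine point is the passage in the middle paragraph — isolating a single ``coherently biased'' sub-family of the bad orbits via a convex metric and a finite cover — whereas the equidistribution of $\{i/n\}$ and the Fourier computation that the only $L^\infty$ density jointly invariant under $\times2$ and $\times3$ is the constant one are standard.
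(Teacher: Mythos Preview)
Your proof is correct but takes a genuinely different route from the paper's. The paper's argument is shorter and purely structural: writing the uniform measure on $F_n$ as $\alpha_n\nu_n'+\beta_n\nu_n''$, with $\nu_n'$ the normalized restriction to the bad orbits and $\alpha_n\geq\eps$, it simply observes that $\nu_n'$ lies in the convex hull of the compact set $K_\delta=\{\mu\ \text{invariant}:d(\mu,\lambda)\geq\delta\}$; since $\lambda$ is ergodic and hence an extreme point of the simplex of invariant measures, it cannot lie in the closed convex hull of $K_\delta$, so any subsequential limit $\nu'$ of the $\nu_n'$ satisfies $\nu'\neq\lambda$, and passing to the limit in $\nu_n\to\lambda$ exhibits $\lambda$ as a nontrivial convex combination $\alpha\nu'+\beta\nu''$ of invariant measures, contradicting extremality directly. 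In particular the ``cancellation'' phenomenon you work to exclude is dispatched in one stroke by the extreme-point property, so your finite-cover-plus-pigeonhole manoeuvre is not needed from that viewpoint. Your approach, by contrast, avoids invoking ergodicity of $\lambda$ as a black box: the cover isolates a coherently biased sub-family, the domination $\nu\leq(r/\eps)\mu_n$ forces any weak-$*$ limit to be absolutely continuous with bounded density, and your Fourier computation (that the only $L^\infty$ density jointly invariant under $\times2$ and $\times3$ is constant) effectively reproves the relevant piece of ergodicity in situ. You thus trade the paper's brevity for a more self-contained, hands-on argument.
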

\begin{proof}
	Let $K_\delta$ denote the set of all invariant measures whose distance from $\lambda$ is at least $\delta$. Let $n_1,\ldots,n_k$ denote the cardinality of $O_1,\ldots,O_k$ respectively, and let $\mu_1,\ldots,\mu_k$ denote the normalized counting measures on these orbits. If for arbitrarily large $n$ the cardinality of the union of the orbits corresponding to measures belonging to $K_\delta$ was larger than $\eps n$, then the convex combination $\nu_n=\sum_{i=1}^{k}\frac{n_i}{n-1}\mu_i$ could be written as $\nu_n=\alpha_n\nu_n'+\beta_n\nu_n''$, where $\nu_n'$ is a convex combination of the measures from $K_\delta$ and $\alpha_n\geq\eps$. Therefore we could find a subsequence $(n_k)$ such that $(\alpha_{n_k}\nu_{n_k}')$ converges to some measure $\alpha\nu'$, where $\alpha\geq\eps$ and $\nu'$ is not the Lebesgue measure $\lambda$ ($K_\delta$ is a compact set not containing the extreme point $\lambda$, therefore the closed convex envelope of $K_\delta$ is also a compact set not containing $\lambda$). Since $\nu_{n_k}$ is the uniform distribution on $F_{n_k}$, the sequence $(\nu_{n_k})$ converges to $\lambda$, and thus we get a decomposition $\lambda=\alpha\nu'+\beta\nu''$ where $\alpha>0$ and $\nu'\neq\lambda$, which contradicts the fact that $\lambda$ is an extreme point in the set of invariant measures.
\end{proof}

This means that even if the set $F_n$ for large $n$ decomposes into a large number of orbits, the majority of points in $F_n$ belong to orbits corresponding to measures lying close to the Lebesgue measure $\lambda$. We will focus specifically on finding the uncommon atomic invariant measures which are not close to $\lambda$, and which we will refer to as the \emph{outliers}.

Another observation concerning the outliers is that if a sequence of outliers converges to a measure which has atoms (and the outliers we have found do seem to exhibit concentration of mass around periodic orbits of lower periods), then another sequence of outliers converges to a measure with just one atom---the atom at zero.

\begin{fct}\label{fct:atom_at_one}
    If there exists a sequence of atomic invariant measures $(\mu_n)$ converging to a limit measure $\mu$ which has atoms of total mass $M$, then there exists a sequence of atomic invariant measures $(\nu_n)$ converging to an invariant measure $\nu$ such that $\nu(\set{0})=M$.
\end{fct}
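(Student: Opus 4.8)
The plan is to use the multiplication maps $M_Q\colon x\mapsto Qx\bmod 1$ ($Q\in\N$), which commute with both $T$ and $S$. Thus $M_Q$ is a continuous factor map of the $\Z^2$-action on the circle; it carries each minimal finite orbit onto a minimal finite orbit, and therefore the push-forward $(M_Q)_*$ sends atomic invariant measures to atomic invariant measures (the push-forward of an invariant measure living on a minimal orbit is invariant and supported on a minimal orbit, hence equal to the normalized counting measure on the image orbit, which is the unique invariant measure there). The crucial feature is that if $P\subseteq F_q$ is a minimal finite orbit and $q\mid Q$, then $M_Q(P)=\set 0$, so $(M_Q)_*$ collapses the counting measure on $P$ to $\delta_0$. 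If $\mu$ had only finitely many atomic orbits a single well-chosen $Q$ would already do; the real work is in the general case (countably many atomic orbits, plus a possibly nasty continuous part).

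First I would record the structure of $\mu$. The atomic part $\mu_{\mathrm{a}}$ of $\mu$ is itself $T$- and $S$-invariant (being discrete, invariance need only be checked on singletons, and there $\mu(T^{-1}\set y)=\mu(\set y)$ forces $\mu_{\mathrm{a}}(T^{-1}\set y)=\mu_{\mathrm{a}}(\set y)$); moreover every atom of $\mu$ lies on a minimal finite orbit and $\mu_{\mathrm{a}}$ is uniform on each such orbit (along a forward orbit the point masses are non-decreasing and bounded, which forces finiteness, and a short argument with $T$- and $S$-preimages rules out strictly preperiodic atoms). Hence $\mu_{\mathrm{a}}=\sum_\ell a_\ell\,\mu_{P_\ell}$ with $P_\ell$ minimal finite orbits, $a_\ell>0$, $\sum_\ell a_\ell=M$; write $q_\ell$ for the denominator of $P_\ell$ and $\mu_{\mathrm{c}}:=\mu-\mu_{\mathrm{a}}$ for the (invariant, non-atomic) continuous part, of total mass $1-M$.

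Now the construction. For each $k$ fix $J_k$ with $\sum_{\ell>J_k}a_\ell<1/k$, put $L_k:=\operatorname{lcm}(q_1,\dots,q_{J_k})$, and among the multiples $Q=L_k t$ ($t\ge 1$) choose $Q_k$ such that $\abs{\widehat{\mu_{\mathrm{c}}}(nQ_k)}<1/k$ for all $1\le n\le k$, where $\widehat{\mu_{\mathrm{c}}}(m)=\int e^{-2\pi imx}\,d\mu_{\mathrm{c}}(x)$. Such $Q_k$ exist: by Wiener's lemma $\frac1N\sum_{\abs s\le N}\abs{\widehat{\mu_{\mathrm{c}}}(s)}^2\to 0$ (because $\mu_{\mathrm{c}}$ has no atoms), so for each fixed $n$ the set $\set{t:\abs{\widehat{\mu_{\mathrm{c}}}(nL_kt)}\ge 1/k}$ has density $0$, and a finite union of such sets still has density $0$. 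Then $(M_{Q_k})_*\mu=(M_{Q_k})_*\mu_{\mathrm{a}}+(M_{Q_k})_*\mu_{\mathrm{c}}$. Since $q_\ell\mid L_k\mid Q_k$ for $\ell\le J_k$, the first summand equals $\bigl(\sum_{\ell\le J_k}a_\ell\bigr)\delta_0$ plus a nonnegative measure of mass $<1/k$, hence converges to $M\delta_0$. The second summand has $n$-th Fourier coefficient $\widehat{\mu_{\mathrm{c}}}(nQ_k)$, which tends to $0$ for every $n\ne 0$ (once $k\ge\abs n$) and equals $1-M$ at $n=0$, so $(M_{Q_k})_*\mu_{\mathrm{c}}\to(1-M)\lambda$. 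Therefore $(M_{Q_k})_*\mu\to\nu:=M\delta_0+(1-M)\lambda$, an invariant measure with $\nu(\set 0)=M$. Finally, for each fixed $k$ the continuity of $M_{Q_k}$ gives $(M_{Q_k})_*\mu_n\to(M_{Q_k})_*\mu$ as $n\to\infty$, so a diagonal choice $n_k$ produces atomic invariant measures $\nu_k:=(M_{Q_k})_*\mu_{n_k}$ with $\nu_k\to\nu$, as required.

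The main obstacle is the continuous part. Merely pushing forward and extracting a subsequential limit yields only $\nu(\set 0)\ge M$ (from the portmanteau inequality on the closed set $\set 0$, combined with $(M_{Q_k})_*\mu(\set 0)\to M$); the danger is that $(M_{Q_k})_*\mu_{\mathrm{c}}$, although non-atomic for every finite $k$, accumulates extra mass at $0$ in the limit, since a weak-$*$ limit of non-atomic measures may perfectly well have an atom. If one knew the continuous part of any such $\mu$ were a multiple of Lebesgue measure this would be automatic, but that is exactly the kind of statement that is not available, so one genuinely must choose the multipliers $Q_k$ so that $\mu_{\mathrm{c}}$ gets "equidistributed" by $M_{Q_k}$ — this is what the Wiener-lemma selection above is for. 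A secondary, purely bookkeeping point to get right is verifying that the $\Z^2$-equivariant continuous map $M_Q$ really does carry minimal orbits to minimal orbits, so that $(M_Q)_*$ stays inside the class of atomic invariant measures.
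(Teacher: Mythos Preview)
Your argument is correct, and its backbone---push forward by a multiplication map $M_Q$ commuting with $T$ and $S$, then diagonalize over the approximating sequence $(\mu_n)$---is exactly what the paper does. The paper simply takes $Q_k$ to be the least common denominator of the first $k$ atoms $x_1,\dots,x_k$ of $\mu$, notes that $(M_{Q_k})_*\mu(\{0\})=\mu(\psi_k^{-1}\{0\})\in(M-\epsilon_k,M]$, and then invokes a diagonal argument on $(M_{Q_k})_*\mu_n$.

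Where your proof genuinely differs is in the treatment of the continuous part $\mu_{\mathrm c}$. The paper does not control $(M_{Q_k})_*\mu_{\mathrm c}$ at all; as you yourself point out, its argument, read literally, only yields $\nu(\{0\})\geq M$ via portmanteau on the closed set $\{0\}$, since a weak-$*$ limit of the non-atomic pushforwards $(M_{Q_k})_*\mu_{\mathrm c}$ could in principle develop an atom at $0$. Your Wiener-lemma selection of $Q_k$ (forcing $\widehat{\mu_{\mathrm c}}(nQ_k)\to 0$ for every fixed $n$) closes this gap and in fact identifies the limit explicitly as $\nu=M\delta_0+(1-M)\lambda$, which is strictly more than the Fact asserts. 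So your route is the same in spirit but more complete: the paper's version is content with a sketch, while yours actually pins down the equality $\nu(\{0\})=M$. The cost is the extra Fourier/Wiener machinery; the gain is a rigorous proof of the stated equality together with the bonus that the non-atomic part of the limit is Lebesgue.
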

\begin{proof}
    Fix a decreasing to zero sequence of positive numbers $(\eps_n)$. Since $\mu$ has at most countably many atoms, say $x_1,x_2,\ldots$, then $\epsilon_k=M-\mu(\{x_1,x_2,\dots,x_k\})$ is a sequence converging to zero.
Let $Q_k$ be the lowest common denominator of $\set{x_1,x_2,\ldots,x_k}$. Since the map $\psi_k(x)=Q_kx\mod 1$ is an endomorphism of the dynamical system $([0,1],S,T)$, the measure $\nu_k$ given by $\nu_k(A)=\mu(\psi_k^{-1}(A))$ is also an invariant measure, and since $\psi_k^{-1}(\set{0})$ contains $\set{x_1,x_2,\ldots,x_k}$, we have $\nu_k(\set{0})>M-\eps_k$. Furthermore, if we set $\nu_{k,n}(A)=\mu_n(\psi_k^{-1}(A))$, we will obtain a sequence of atomic invariant measures converging in $n$ to $\nu_k$. Using a diagonal argument, we can now find a sequence of atomic measures $\nu_{k,n_k}$ converging in $k$ to $\nu$.
\end{proof}

\section{Methodology}
We used the Mathematica software to decompose the sets $F_n'$ for $n$ up to $700,000$ (choosing only $n$ coprime with both $2$ and $3$) into disjoint orbits, using the following algorithm for every $n$ (note that we decompose $F_n'$ rather than all of $F_n$, in order to avoid generating duplicate orbits): 
\begin{enumerate}
    \item Let $k=1$, let $x=\frac kn$ and let $O_{n,k}=\set{x}$.
    \item Repeatedly add to $O_{n,k}$ all numbers of the form $2x \mod 1$ and $3x\mod 1$ for $x\in O_{n,k}$, until no new numbers are added.
    \item If $\bigcup_{i=1}^{k}O_{n,i}=F_n'$, then proceed to the following $n$; otherwise increase $k$ repeatedly by $1$ until $\frac kn$ becomes the smallest element of $F_n'$ which does not belong to $\bigcup_{i=1}^{k-1}O_{n,i}$. Then let $x=\frac kn$, $O_{n,k}=\{x\}$, and return to step $2$.
\end{enumerate}

We then selected the orbits for which the cumulative distribution function (cdf) for the normalized counting measure supported by the orbit differed from the cdf of the Lebesgue measure $\lambda$ on $[0,1]$ by at least $\frac{1}{10}$ at at least one point. The choice of threshold value $\frac1{10}$ is dictated by analysis of the histograms of the distance from $\lambda$ for individual denominators (these histograms are provided further below); in most cases, just above $\frac1{10}$, there appears a small isolated cluster. The main observation is that such ``outlier orbits'' have appeared for $n$ as large as $609,427$. As expected from theorem \ref{thm:feworbits}, the outlier orbits are rare and relatively short compared to the size of $F_n$ (which consists of $n-1$ points): for $n=609,427$ there are two outlier orbits, both of length $1,080$. In general, for denominators between $10,000$ and $700,000$ we found 64 outlier orbits. As none of them were symmetric around $\frac{1}{2}$, they formed 32 mirrored pairs, hence we chose just one of each pair for subsequent study (we chose the orbit which had more points in the left half of $[0,1]$ than in the right), ending up with 32 outlier orbits.  

Figure on the page \pageref{bigorbit} contains the cdf and histogram of the longest outlier orbit we have found---the orbit of the point $\frac{31}{609427}$. The corresponding normalized counting measure appears to be far from uniform, with a notable concentration of mass to the right of $1$, but also to the right of $\frac{1}{2}$ and several other rational points with small denominators of the form $2^p$ and $3^q$. Similar phenomena can be seen on pages \pageref{moreorbits_start}-\pageref{moreorbits_end} for other orbits (for smaller $n$). Later we will explain why a concentration of mass near $0$ forces smaller ``shadow concentrations'' of mass near $\frac12, \frac13, \frac23$ and so on. It can be predicted that if we classify some number of points, say $M$, in the orbit as close to zero, then, by the same criteria of closeness, there will be approximately $\sqrt{M\frac{2\ln2}{\ln3}}$ points close to $\frac12$ and $\frac12\sqrt{M\frac{2\ln3}{\ln2}}$ points close to each of the points $\frac13$ and $\frac23$. It can be thus seen that with increasing $M$ the contribution of the shadow concentrations will eventually vanish.

\subsection{Hypotheses}
Based on our experiments, we are inclined to believe that there are arbitrarily long outlier orbits (which exist within $F_n$ for arbitrarily large $n$). By passing to a convergent subsequence of the corresponding normalized counting measures, we would obtain a limit measure different from the Lebesgue measure $\lambda$. Our results also suggest that (at least some of) the limit measures have a positive atomic part. Fact \ref{fct:atom_at_one} implies that then there would also exist a sequence of outlier measures converging to a measure with a positive mass at $0$ and otherwise continuous. Such behavior is also observed in our numerical experiments. This motivates us to formulate the following conjecture (and call for a rigorous proof):

\begin{conj}
There exists a sequence of atomic ergodic measures converging to a measure with a positive atomic part.
\end{conj}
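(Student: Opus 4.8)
\medskip
\noindent\textbf{Proof proposal.}\
The plan is to reduce the conjecture to a concrete statement about the size and spatial distribution of the multiplicative subgroup $\langle 2,3\rangle$ modulo a well-chosen $n$, and then to attack that statement either explicitly or conditionally. First I would reduce to producing an atom at $0$: since a weak-$\ast$ limit detects mass that clusters near a point and $\lambda$ puts no mass near any point, it suffices to exhibit a sequence of finite orbits $O_m$ with $\abs{O_m}\to\infty$, a constant $c>0$, and radii $\eps_m\to0$ such that
\[
    \frac{\#\bigl(O_m\cap\set{x:\operatorname{dist}(x,0)\leq\eps_m}\bigr)}{\abs{O_m}}\geq c .
\]
Passing to a weak-$\ast$ convergent subsequence of the normalized counting measures $\mu_m$, the Portmanteau inequality on closed sets gives $\nu(\set{x:\operatorname{dist}(x,0)\leq\delta})\geq c$ for every $\delta>0$, hence $\nu(\set 0)\geq c>0$; the requirement $\abs{O_m}\to\infty$ is what makes the sequence nontrivial (otherwise the constant sequence at the fixed point $0$ would formally qualify). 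As an alternative target, the factor-map trick behind Fact \ref{fct:atom_at_one} --- the map $\psi_Q(x)=Qx\bmod1$ carries an orbit onto an orbit uniformly-to-one, hence a normalized counting measure to a normalized counting measure --- shows it would also suffice to produce orbits whose mass clusters near the finite set $\set{j/Q:0\leq j<Q}$ for a \emph{fixed} $Q$ and then push forward by $\psi_Q$; but aiming directly at $0$ is cleanest.

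Second, I would try to realize such orbits explicitly. The orbit points one can write down by hand that lie near $0$ are the ``escape'' points: the orbit of $\tfrac1n$ contains all $\tfrac{2^i3^j}{n}$ with $2^i3^j<n$, a set of cardinality $\sim\frac{(\log n)^2}{2\log2\,\log3}$ lying in $[0,1)$, of which about $\#\set{2^i3^j\leq\eps n}$ lie in $[0,\eps]$. For these to be a positive fraction of the entire orbit $O=\langle2,3\rangle\subseteq(\Z/n\Z)^*$ one needs an infinite family of $n$ coprime to $6$ for which $\langle2,3\rangle$ is \emph{tiny} --- of size $O((\log n)^2)$, or at least $o(n)$ with a matching escape count. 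So the core step is a number-theoretic search for such a family: natural candidates are suitable divisors of $(2^a-1)(3^b-1)$ with $a,b$ small relative to $n$, or products of primes $p$ with $\operatorname{ord}_p(2)$ and $\operatorname{ord}_p(3)$ simultaneously small, arranged so that the joint subgroup does not grow too much. A softer variant, possibly easier to arrange, is to allow $\abs{O_m}$ to be large but to force \emph{many} returns to a neighbourhood of $0$ at several scales at once --- for instance a CRT construction $n=n_1n_2\cdots$ in which $2$ and $3$ are each $\equiv1$ modulo a large factor --- while using the estimate recorded after Fact \ref{fct:atom_at_one} to check that the unavoidable shadow concentrations near $\tfrac12,\tfrac13,\tfrac23,\dots$ are of strictly lower order and hence harmless.

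The hard part, and I expect it to be essentially the \emph{only} obstacle, is exactly this arithmetic step. At present there is not a single \emph{proven} example of an atomic invariant measure with many atoms that is bounded away from $\lambda$ --- the outliers found in this paper are numerical --- which is precisely question (\ref{Q2}) of the introduction. Theorem \ref{thm:feworbits} sharpens the difficulty from both sides: any successful family must consist of orbits of length $o(n)$ (so one cannot take $F_n$ or a generic orbit), and the same theorem rules out a purely soft compactness argument applied to the natural sequences, whose limits are forced to be $\lambda$ --- one genuinely needs a hand-picked sparse subsequence of denominators. Controlling the size and the spatial distribution of $\langle2,3\rangle\bmod n$ for a chosen $n$ sits in the circle of Artin's primitive-root conjecture and the distribution of smooth numbers, and obtaining what is needed unconditionally --- or even under GRH --- appears to be open. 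The realistic outcomes of this plan are therefore either (a) a conditional proof, modulo a statement guaranteeing infinitely many $n$ with simultaneously small $\operatorname{ord}_n(2)$ and $\operatorname{ord}_n(3)$, or (b) the isolation of an exact self-similar/substitutive renormalization on orbits --- the mechanism producing the shadow concentration described above --- that can be iterated to build the limiting measure directly. Either route would settle the conjecture.
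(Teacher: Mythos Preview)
There is no proof in the paper to compare against: the statement is explicitly labelled a \emph{Conjecture}, and the authors ``call for a rigorous proof'' on the basis of numerical evidence only. So the benchmark here is not a proof but an open problem.

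Your proposal is not a proof either, and you say so. What you have written is a coherent research plan. The first reduction --- that it suffices to manufacture an atom at $0$, via the Portmanteau inequality and a subsequence --- is correct and is exactly the mechanism behind Fact~\ref{fct:atom_at_one} in the paper; your alternative route through $\psi_Q$ is that fact verbatim. The second reduction --- that an atom at $0$ would follow from an infinite family of $n$ coprime to $6$ for which the multiplicative subgroup $\langle 2,3\rangle\subset(\Z/n\Z)^*$ has size $O((\log n)^2)$ (or at least is small enough that the ``escape'' points $2^i3^j/n<\eps$ form a positive fraction of the orbit) --- is also correct and is the natural translation of the problem into arithmetic.

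The gap is the one you name yourself: producing such a family of moduli is, as far as anyone knows, open. It lives next to Artin-type questions about simultaneous small orders of $2$ and $3$, and neither the paper nor your proposal supplies it. Your candidate constructions (divisors of $(2^a-1)(3^b-1)$, CRT products of primes with small orders) are reasonable places to look, but none is known to work unconditionally, and you correctly flag that even a GRH-conditional statement of the required strength is not in the literature. So the honest summary is: your reductions match and slightly sharpen the paper's heuristics, but the decisive arithmetic input is missing on both sides, which is precisely why the paper states this as a conjecture rather than a theorem.
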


Moreover, we are inclined to believe that if the conjecture holds then every atomic measure may appear as part of some limit measure. 
\smallskip

Assuming Furstenberg's conjecture is true, the continuous part of every limit measure would have to be (proportional to) $\lambda$. But because in any finite resolution there is no difference between the histogram of a singular measure and a measure with many small atoms, we doubt if experimental research can provide any hint toward answering this question.

\section{Presentation of results}
We begin with the histogram and CDF of the counting measure on the orbit of $\frac{31}{609427}$, which consists of $1080$ points. The histogram, like all other histograms of orbits, has been normalized to represent a probability distribution (i.e. the total area of all the bars is $1$). Note the concentration of mass around $1$, as well as a smaller concentrations 
around $\frac{1}{2},\frac{1}{3},\frac{2}{3},\frac{1}{4},$ and $\frac{3}{4}$. We will comment on these further on page \pageref{fig:SymbolicLongest}.\\
\smallskip\label{bigorbit}
    \includegraphics[width=0.5\textwidth]{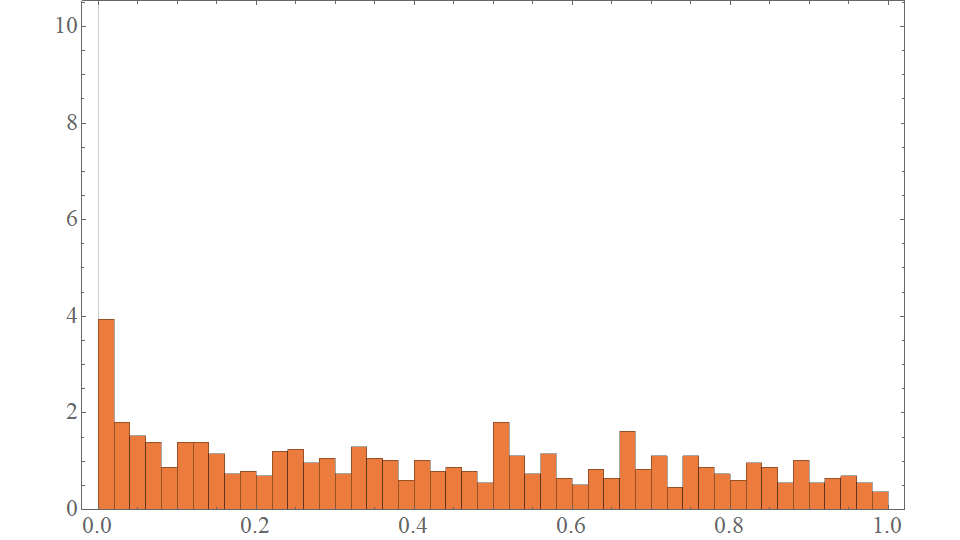}
    \includegraphics[width=0.5\textwidth]{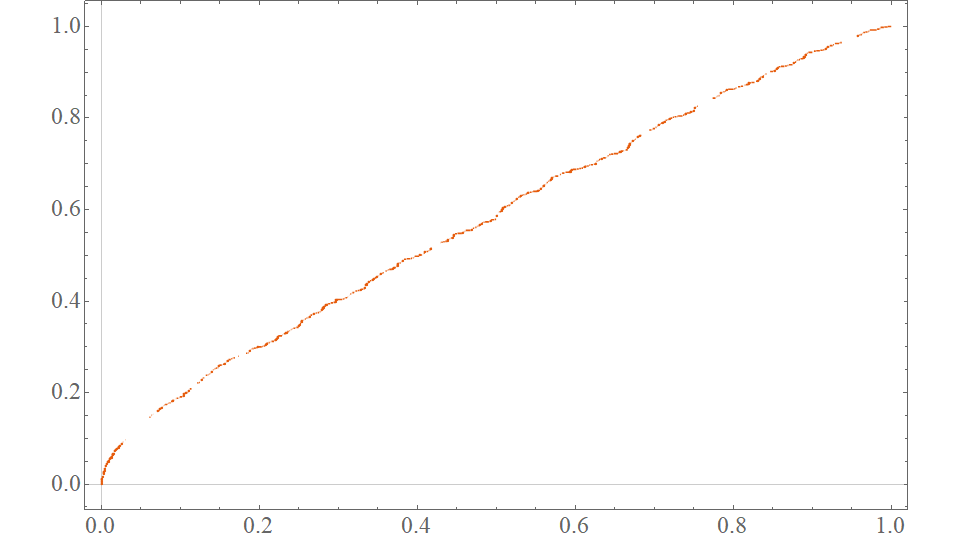}
    \pagebreak
The following 3 images present the histograms and CDF's of three orbits in $F_n$ for $n=609427$: the first one is the outlier orbit shown on previous page, the second one is the orbit whose corresponding atomic measure is the closest to $\lambda$ (it is the orbit of the point $\frac{461}{609427}$), and the last one is the orbit whose distance to $\lambda$ is the median among all the orbits within $F_{609427}$, namely the orbit of $\frac{2809}{609427}$. Each of the three orbits consists of 1080 points.
\smallskip\hrule\smallskip  \includegraphics[width=0.5\textwidth]{Histogram032.png} \includegraphics[width=0.5\textwidth]{CDF032.png}\\The histogram and CDF for the orbit of $\frac{31}{609427}$.
\smallskip\hrule\smallskip  \includegraphics[width=0.5\textwidth]{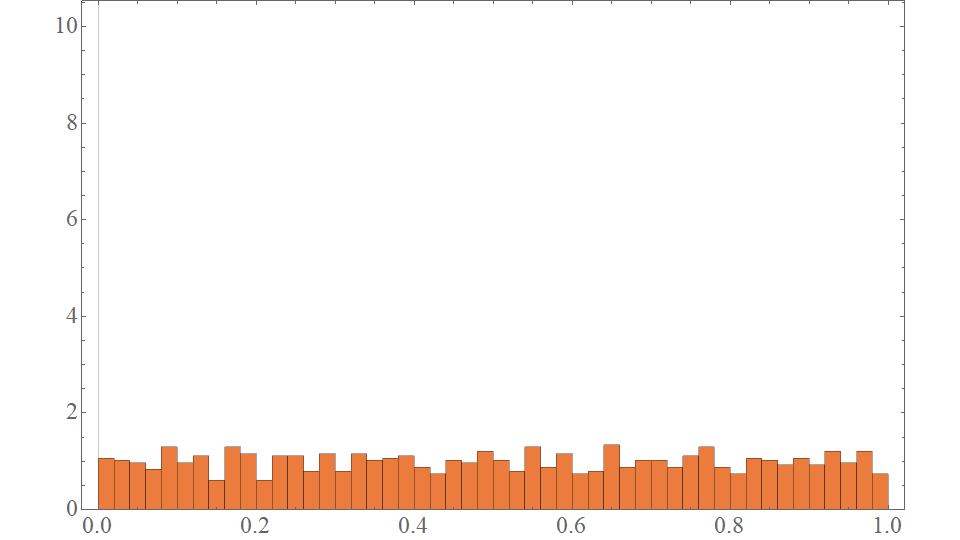} \includegraphics[width=0.5\textwidth]{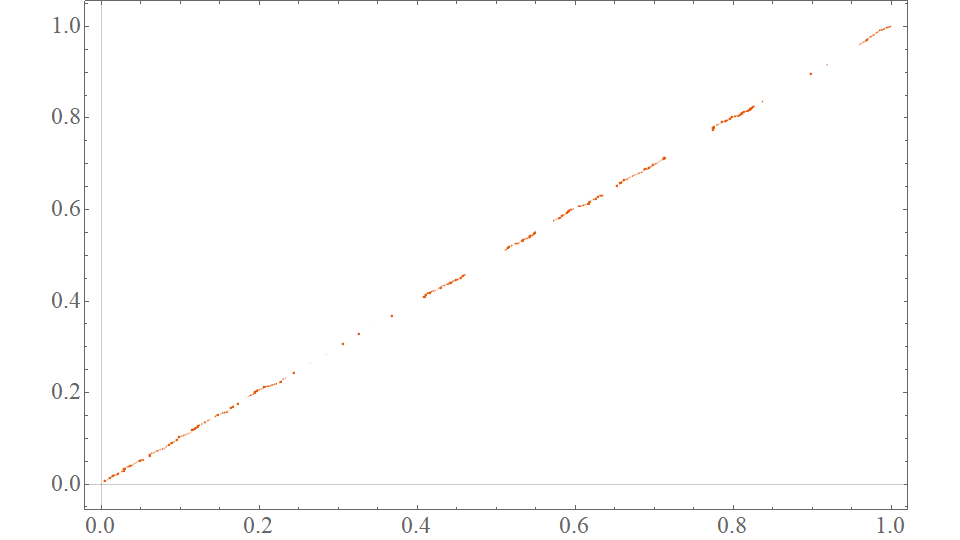}\\The histogram and CDF for the orbit of $\frac{461}{609427}$.
\smallskip\hrule\smallskip  \includegraphics[width=0.5\textwidth]{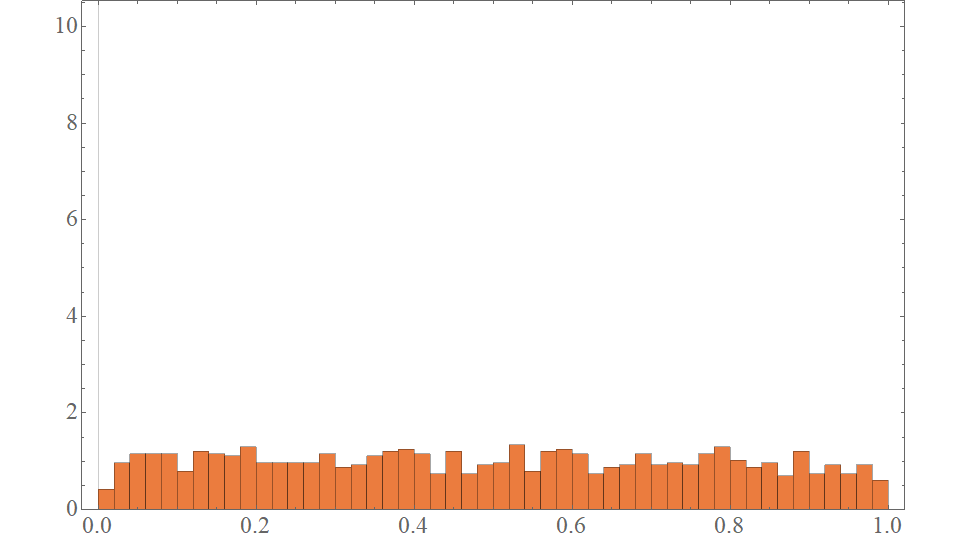} \includegraphics[width=0.5\textwidth]{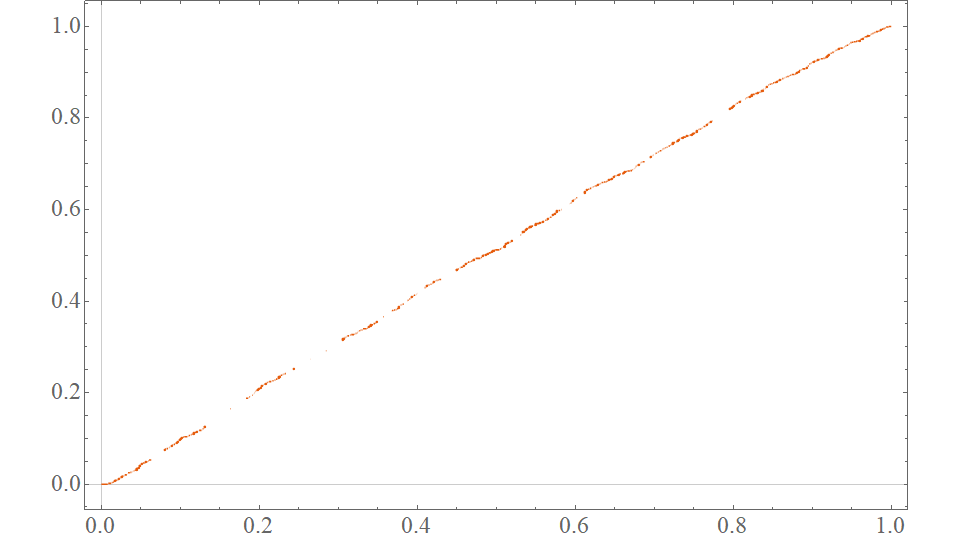}\\The histogram and CDF for the orbit of $\frac{5087}{609427}$.

\pagebreak
To justify our choice of outliers, we present the histograms of the distances between orbits of atomic measures for various denominators and the Lebesgue measure $\lambda$. We limited the histograms to include orbits of length $100$ or more, which are supported by points of the form $\frac{k}{n}$ where $n$ is the respective denominator and $k$ is coprime with $n$, and have more points in the left side of the interval $[0,1]$ than in the right (in fact for the denominators for which we discovered the outliers, no orbits were symmetric, so choosing just one orbit out of every mirrored pair simply removes redundancy from the data).

\noindent\includegraphics[width=0.5\textwidth]{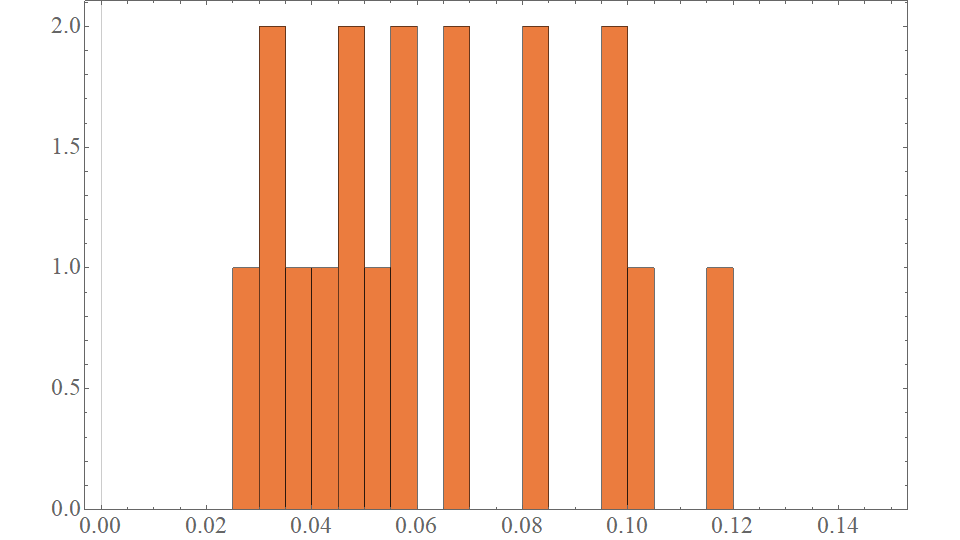} \includegraphics[width=0.5\textwidth]{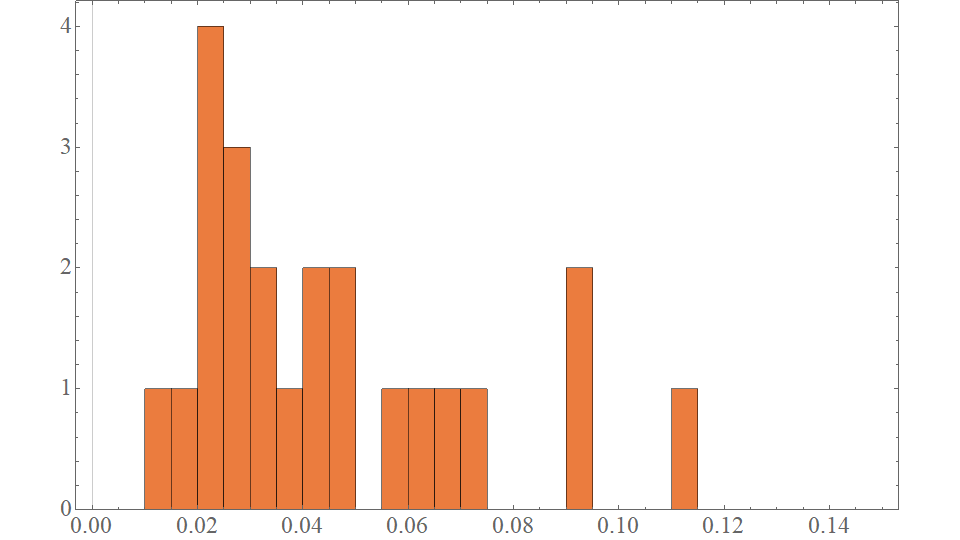}\\Histogram of the distance from $\lambda$ for a selected subset of most significant orbits in $F_{15025}$ (18 orbits) and $F_{21667}$(23 orbits).\hrule\smallskip\noindent\includegraphics[width=0.5\textwidth]{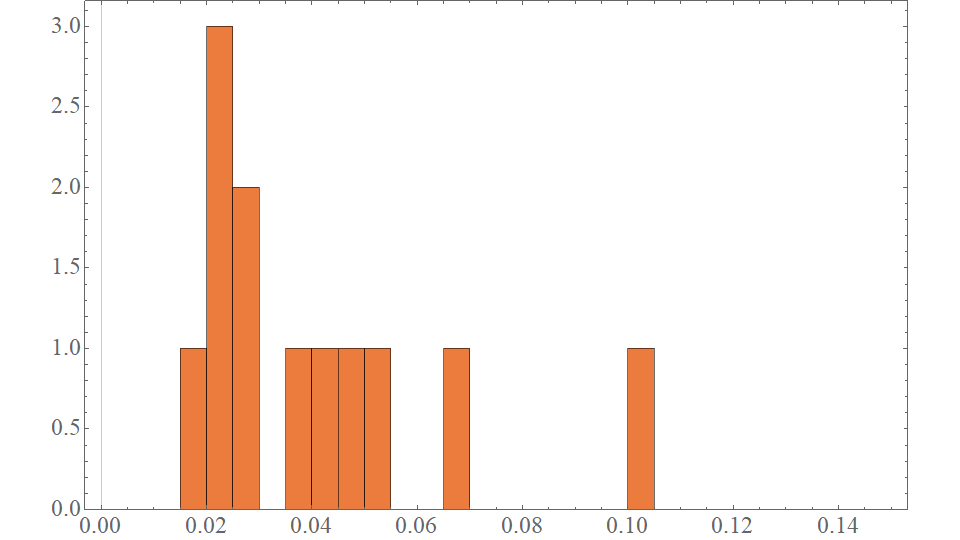} \includegraphics[width=0.5\textwidth]{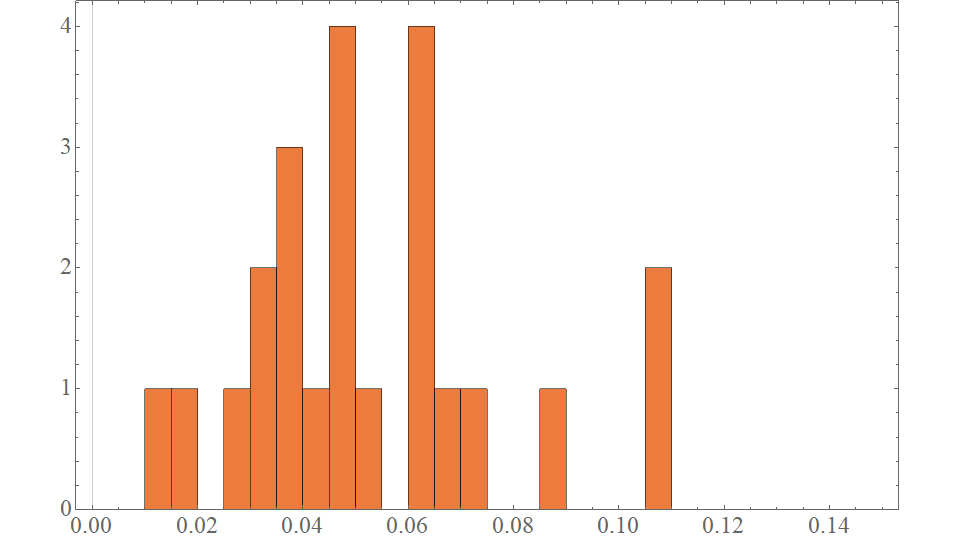}\\Histogram of the distance from $\lambda$ for a selected subset of most significant orbits in $F_{22015}$ (12 orbits) and $F_{33215}$(23 orbits).\hrule\smallskip\noindent\includegraphics[width=0.5\textwidth]{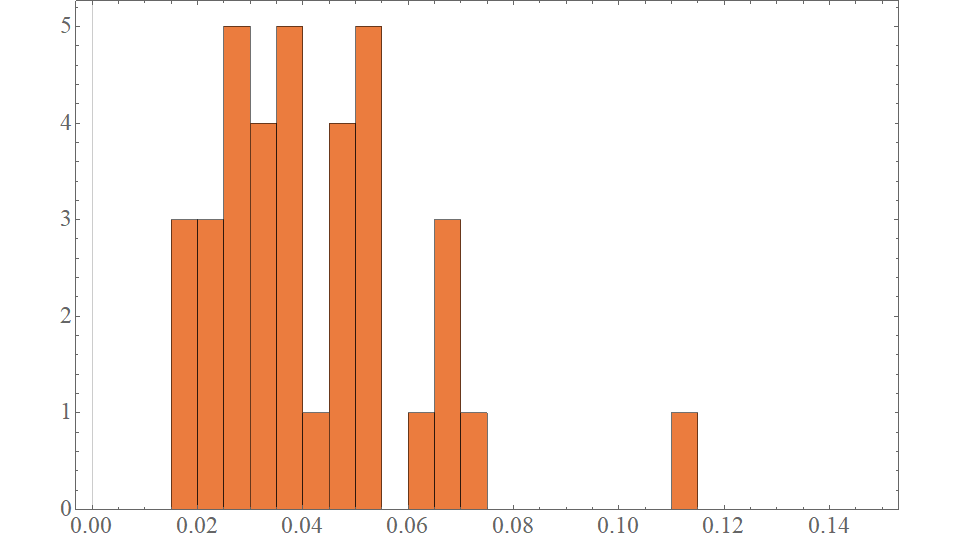} \includegraphics[width=0.5\textwidth]{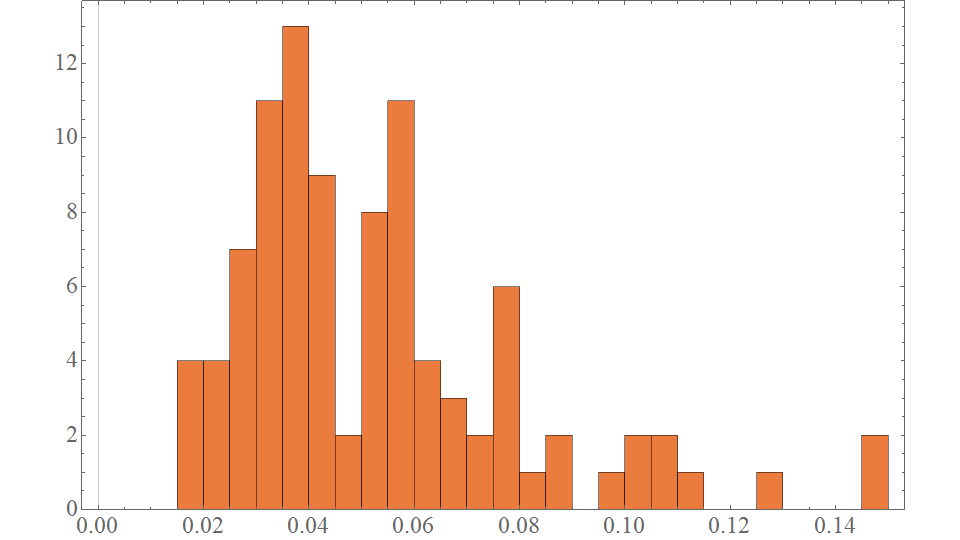}\\Histogram of the distance from $\lambda$ for a selected subset of most significant orbits in $F_{50557}$ (36 orbits) and $F_{86963}$(96 orbits).\hrule\smallskip\noindent\includegraphics[width=0.5\textwidth]{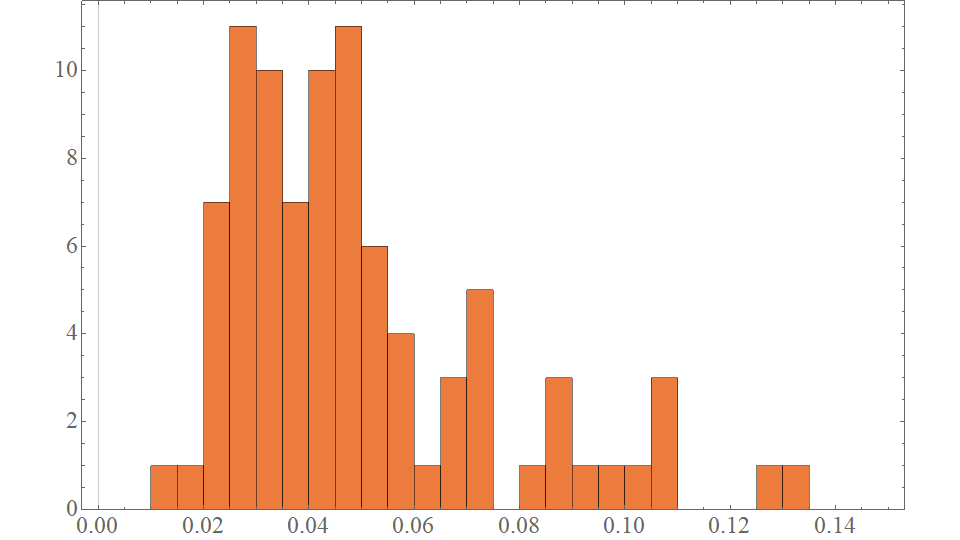} \includegraphics[width=0.5\textwidth]{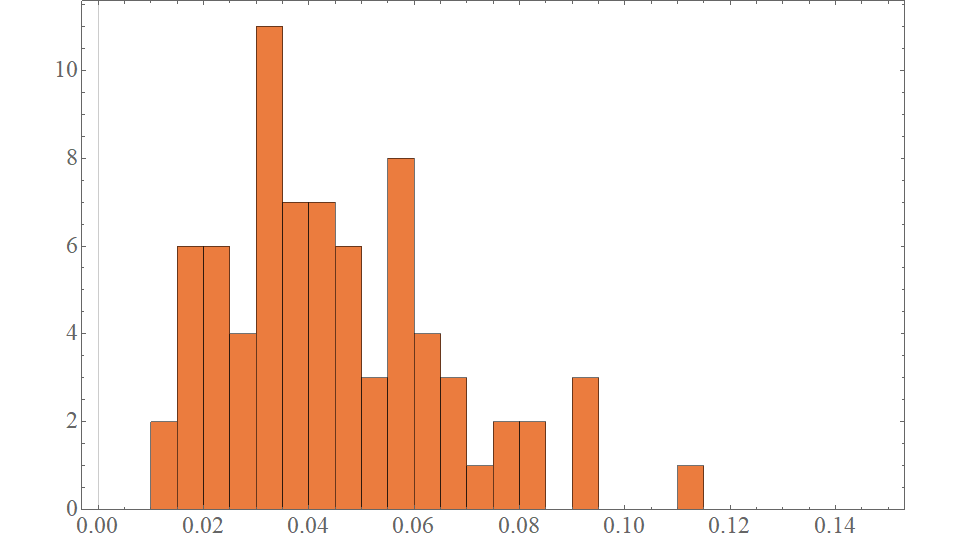}\\Histogram of the distance from $\lambda$ for a selected subset of most significant orbits in $F_{108335}$ (89 orbits) and $F_{116123}$(76 orbits).\hrule\smallskip\noindent\includegraphics[width=0.5\textwidth]{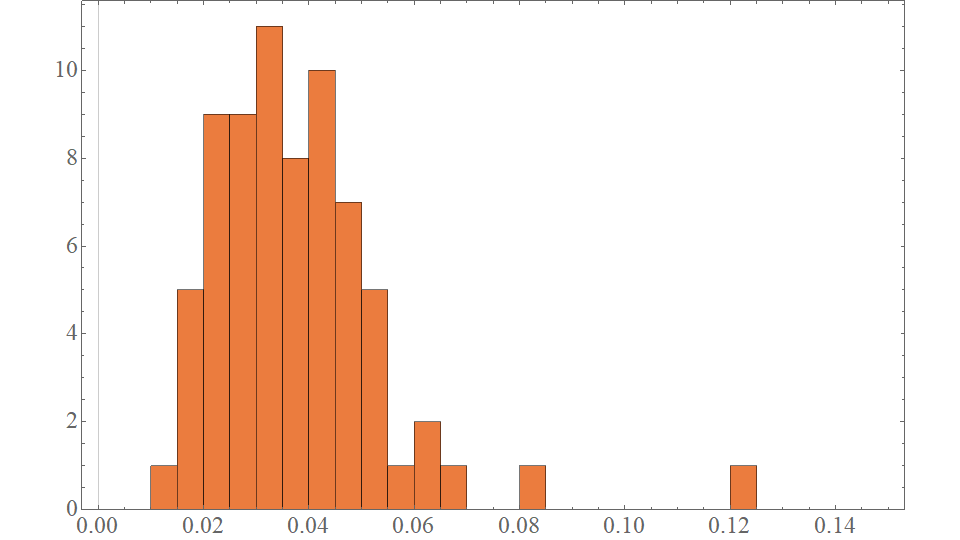} \includegraphics[width=0.5\textwidth]{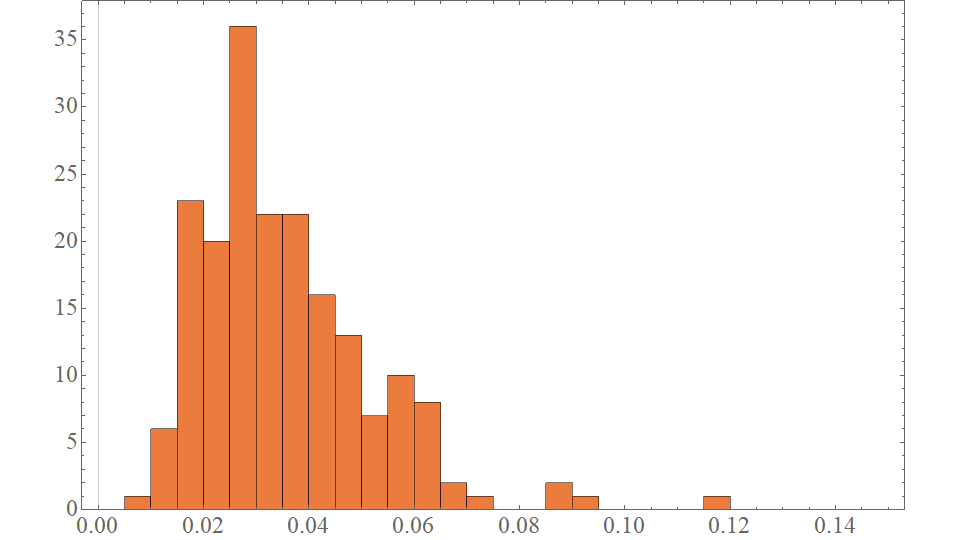}\\Histogram of the distance from $\lambda$ for a selected subset of most significant orbits in $F_{119795}$ (71 orbits) and $F_{434815}$(191 orbits).\hrule\smallskip\noindent\includegraphics[width=0.5\textwidth]{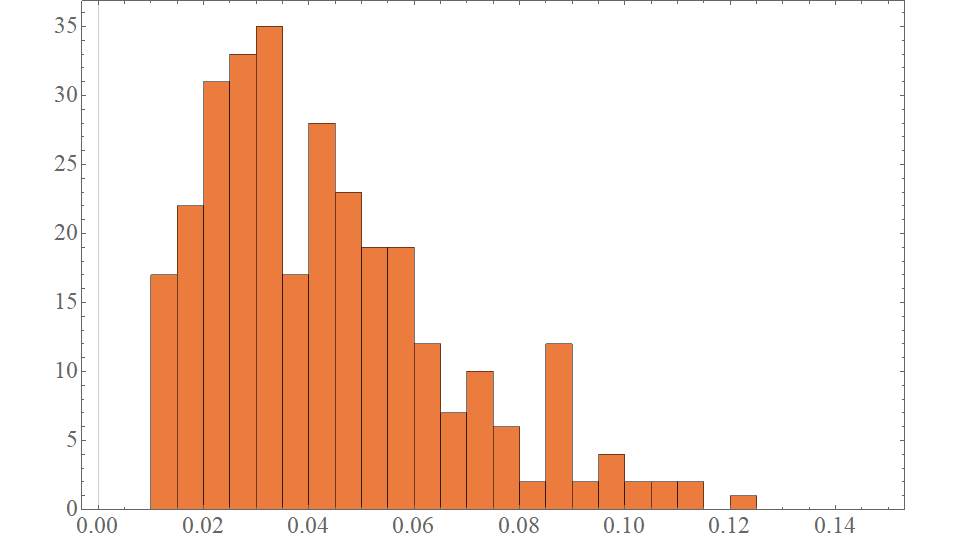} \includegraphics[width=0.5\textwidth]{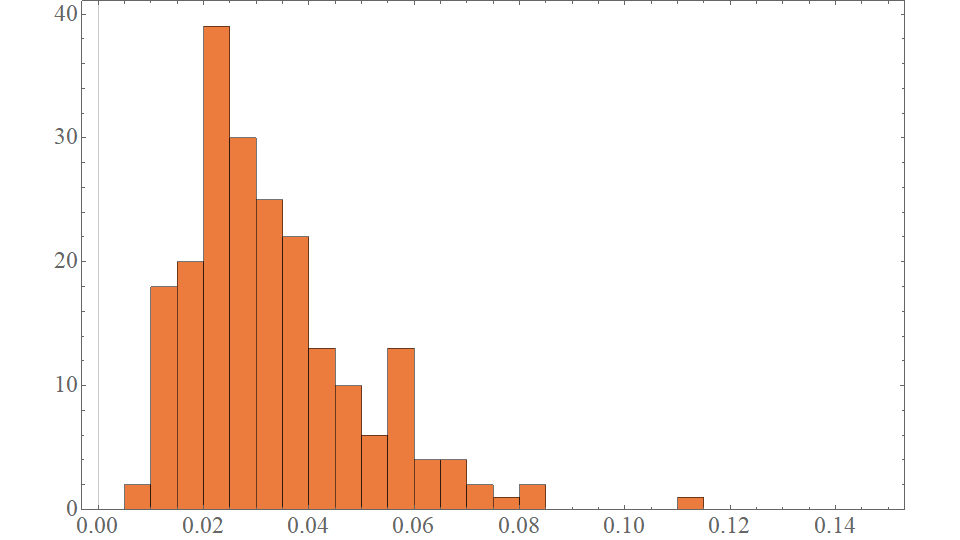}\\Histogram of the distance from $\lambda$ for a selected subset of most significant orbits in $F_{580615}$ (306 orbits) and $F_{609427}$(212 orbits).\hrule\smallskip
Observe that for most larger denominators there appears a small isolated cluster above 0.10. Thsese are the outliers.

\pagebreak
Below we present some other outlier orbits corresponding to denominators larger than $10,000$:\\\label{moreorbits_start}
\smallskip\hrule\smallskip\noindent\includegraphics[width=0.5\textwidth]{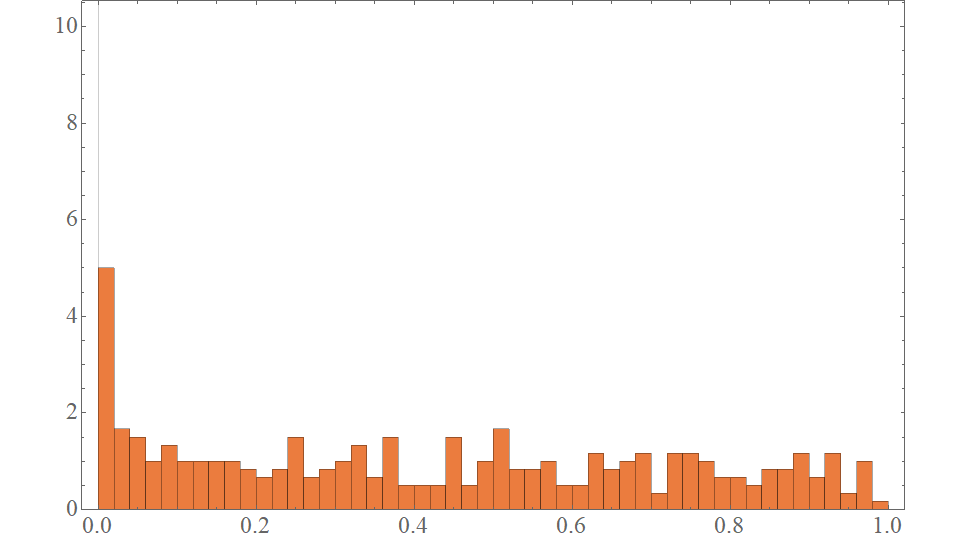} \includegraphics[width=0.5\textwidth]{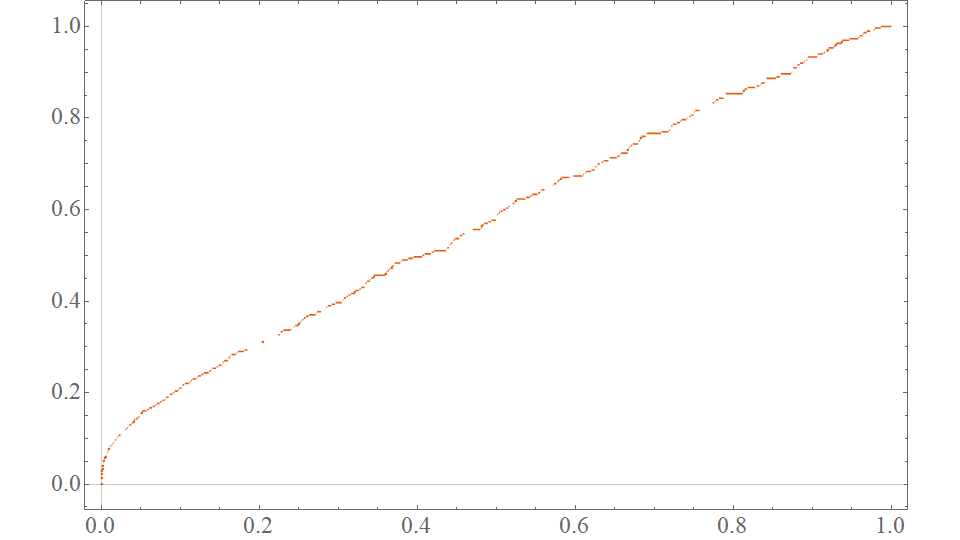}\\1. The histogram and CDF for the orbit of $\frac{1}{15025}$. The orbit has 300 points.\smallskip\hrule\smallskip\noindent\includegraphics[width=0.5\textwidth]{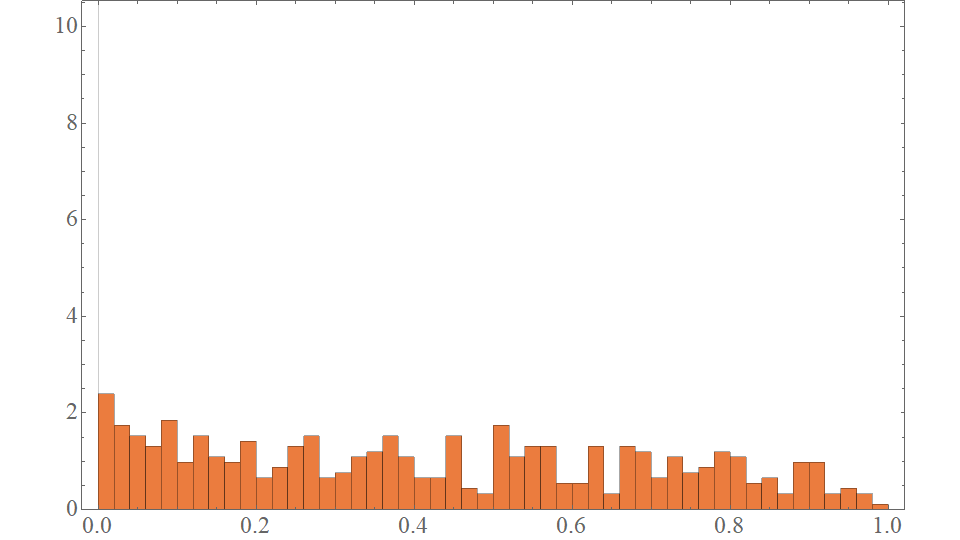} \includegraphics[width=0.5\textwidth]{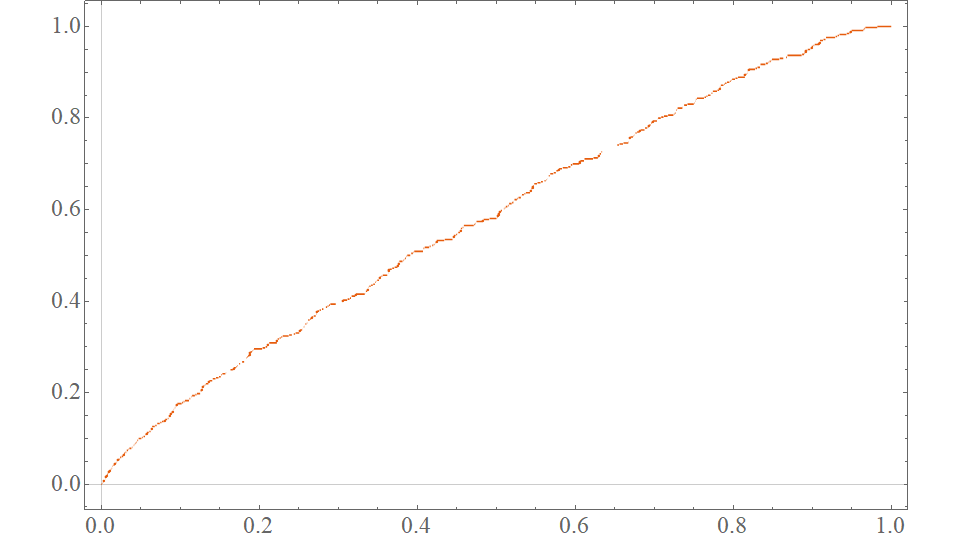}\\2. The histogram and CDF for the orbit of $\frac{23}{21667}$. The orbit has 460 points.\smallskip\hrule\smallskip\noindent\includegraphics[width=0.5\textwidth]{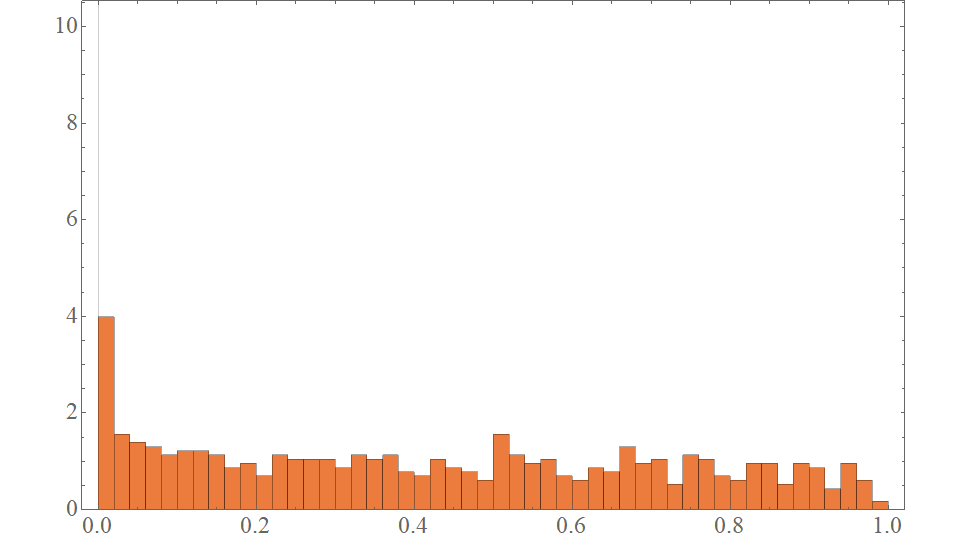} \includegraphics[width=0.5\textwidth]{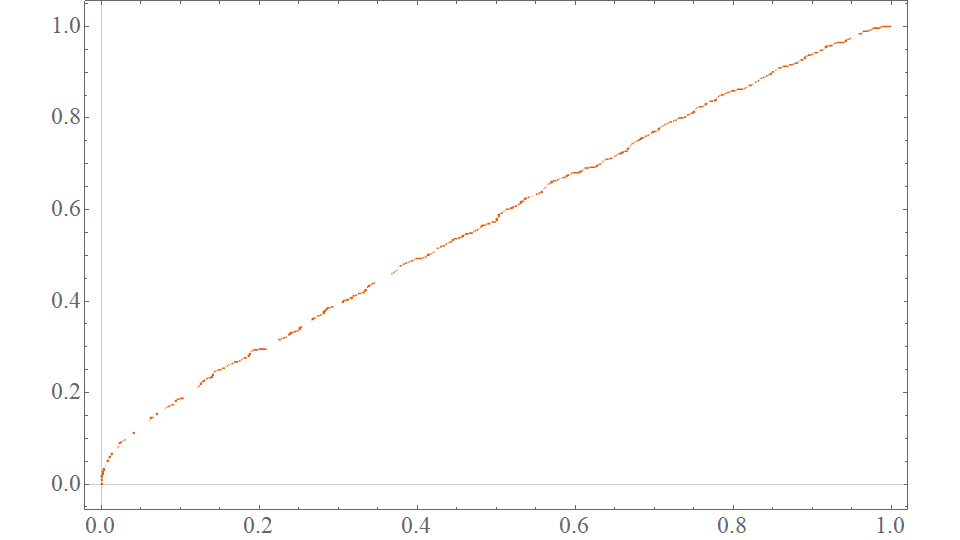}\\3. The histogram and CDF for the orbit of $\frac{1}{22015}$. The orbit has 576 points.\smallskip\hrule\smallskip\noindent\includegraphics[width=0.5\textwidth]{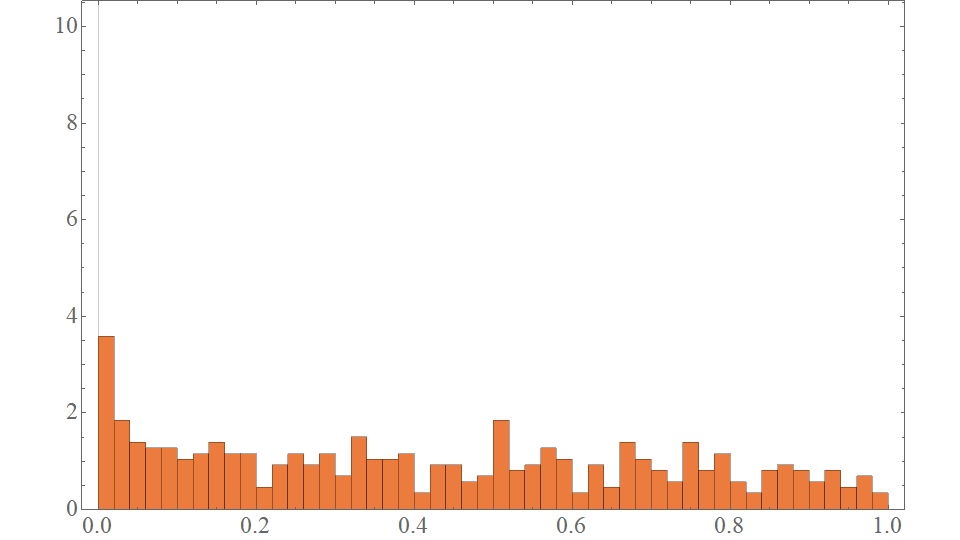} \includegraphics[width=0.5\textwidth]{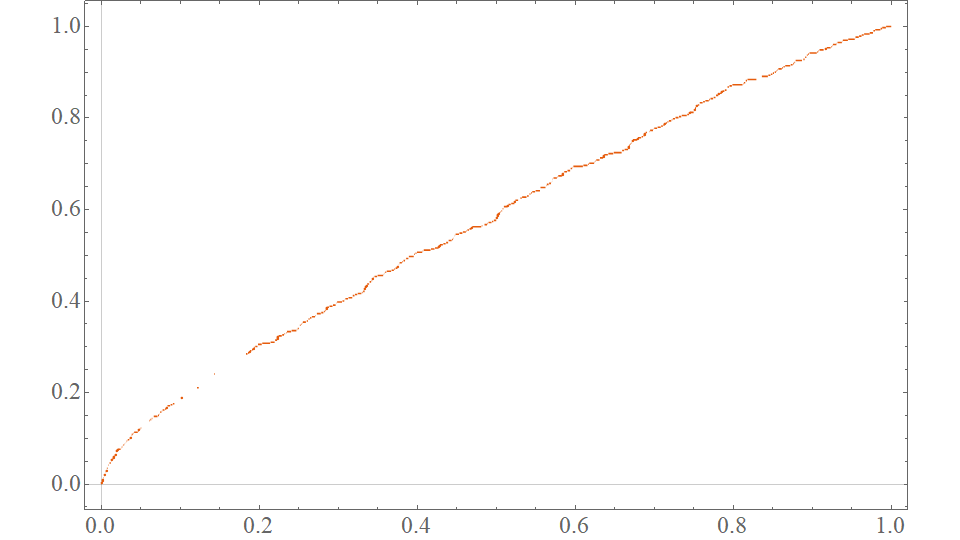}\\4.The histogram and CDF for the orbit of $\frac{19}{33215}$. The orbit has 432 points.\smallskip\hrule\smallskip\noindent \includegraphics[width=0.5\textwidth]{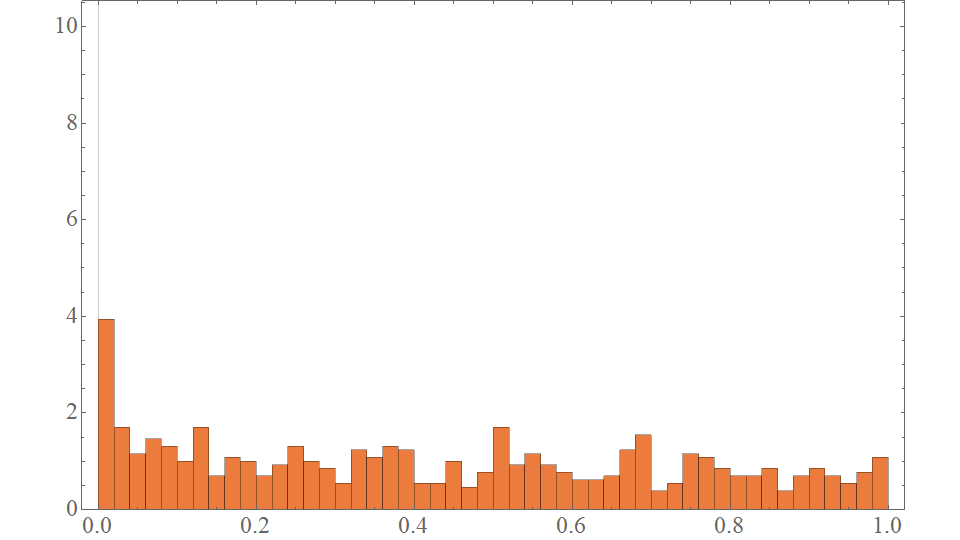}\noindent\includegraphics[width=0.5\textwidth]{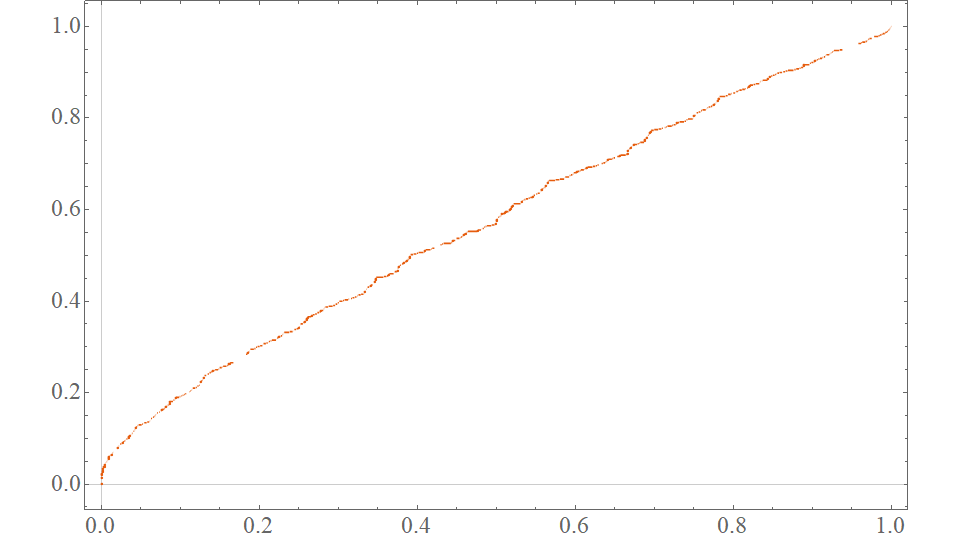}\\5. The histogram and CDF for the orbit of $\frac{1}{50557}$. The orbit has 648 points.\smallskip\hrule\smallskip \includegraphics[width=0.5\textwidth]{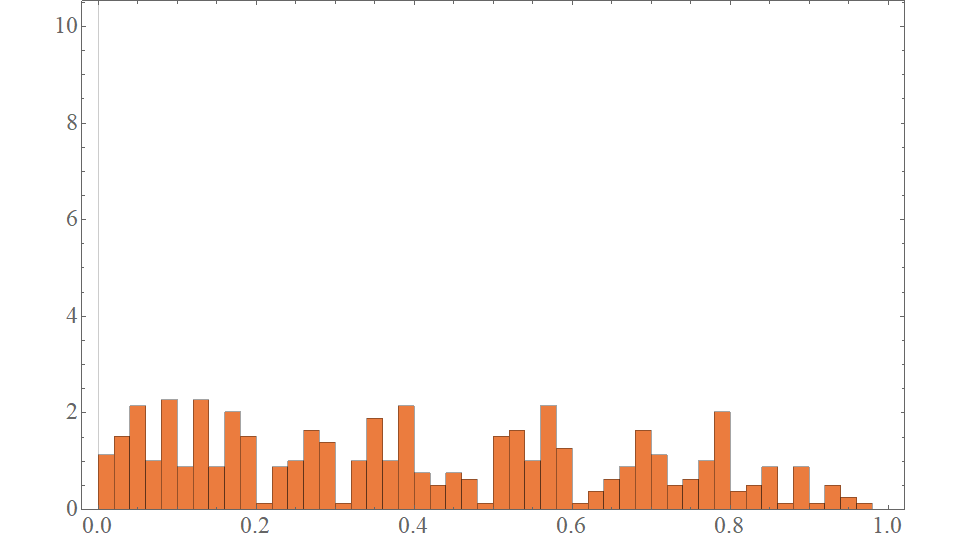}\noindent\includegraphics[width=0.5\textwidth]{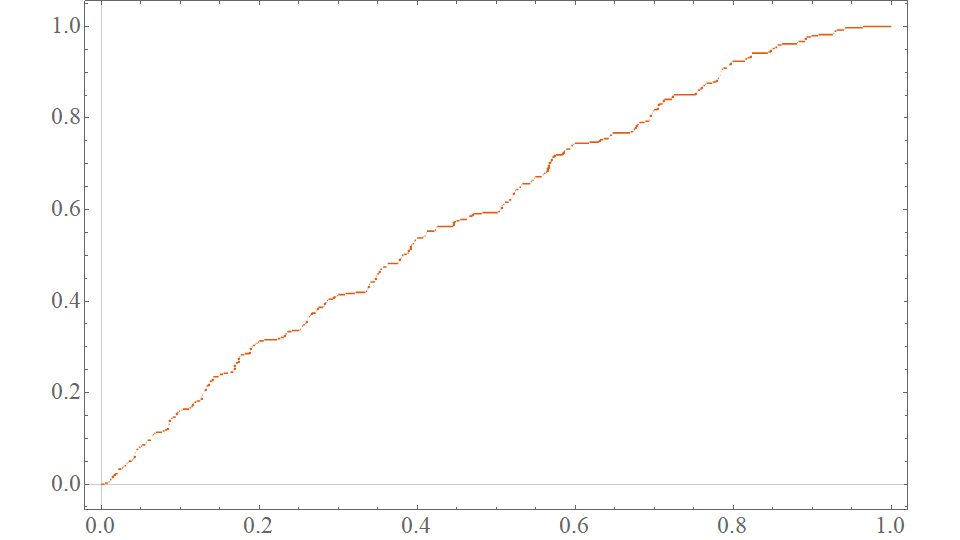}\\6. The histogram and CDF for the orbit of $\frac{373}{86963}$. The orbit has 396 points.\smallskip\hrule\smallskip \includegraphics[width=0.5\textwidth]{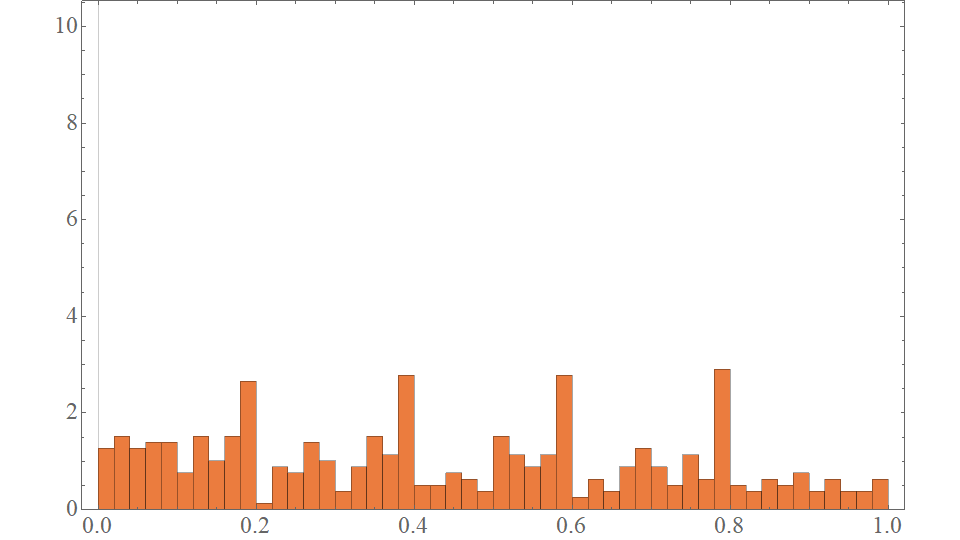}\noindent\includegraphics[width=0.5\textwidth]{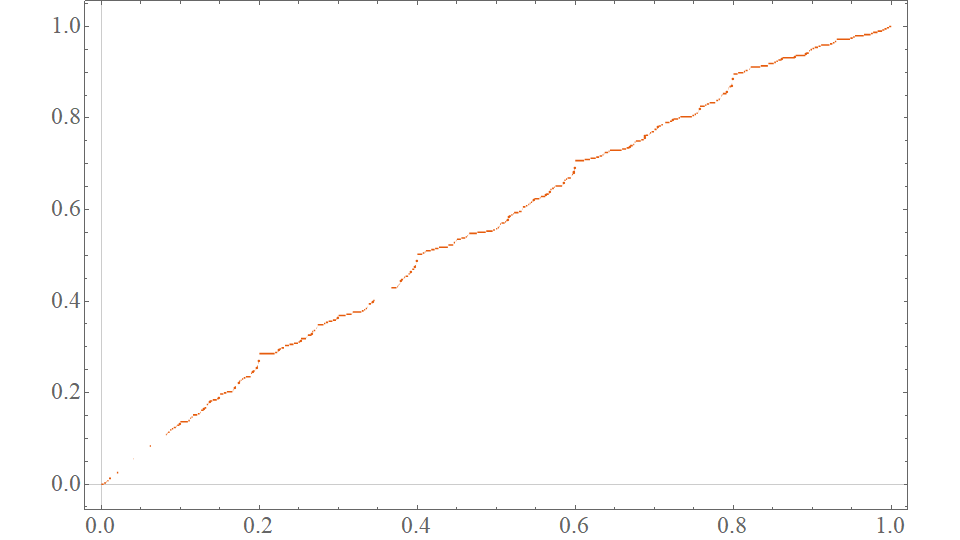}\\7. The histogram and CDF for the orbit of $\frac{271}{86963}$. The orbit has 396 points.\smallskip\hrule\smallskip  \includegraphics[width=0.5\textwidth]{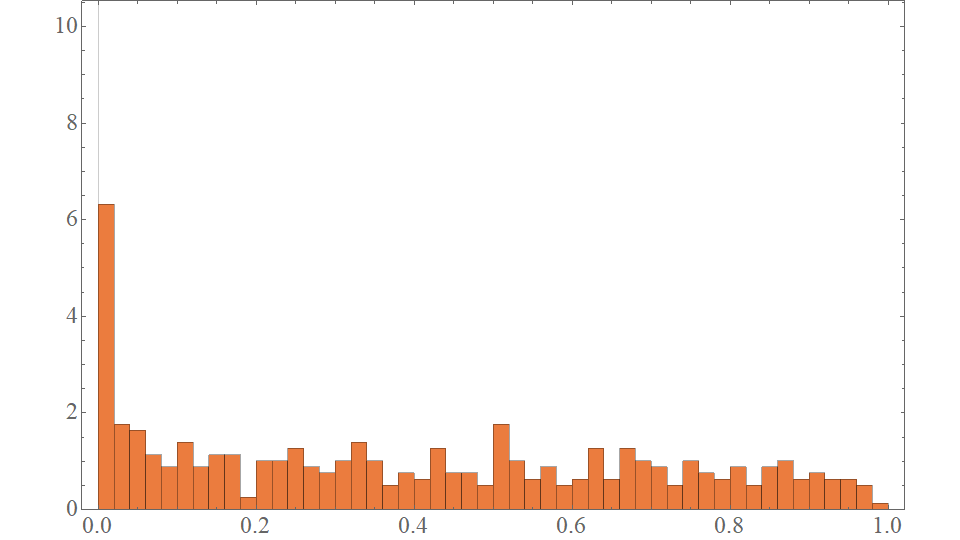}\noindent\includegraphics[width=0.5\textwidth]{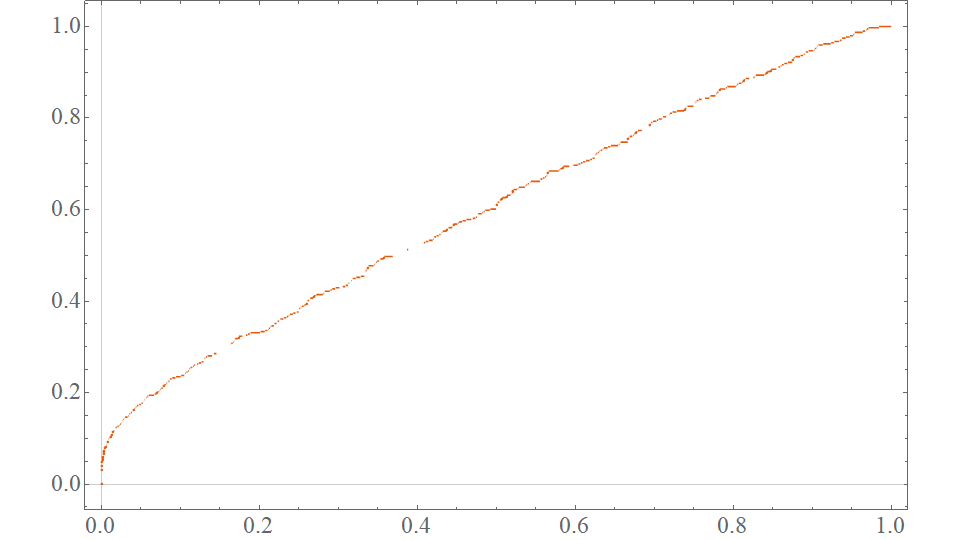}\\8. The histogram and CDF for the orbit of $\frac{1}{86963}$. The orbit has 396 points.\smallskip\hrule\smallskip \includegraphics[width=0.5\textwidth]{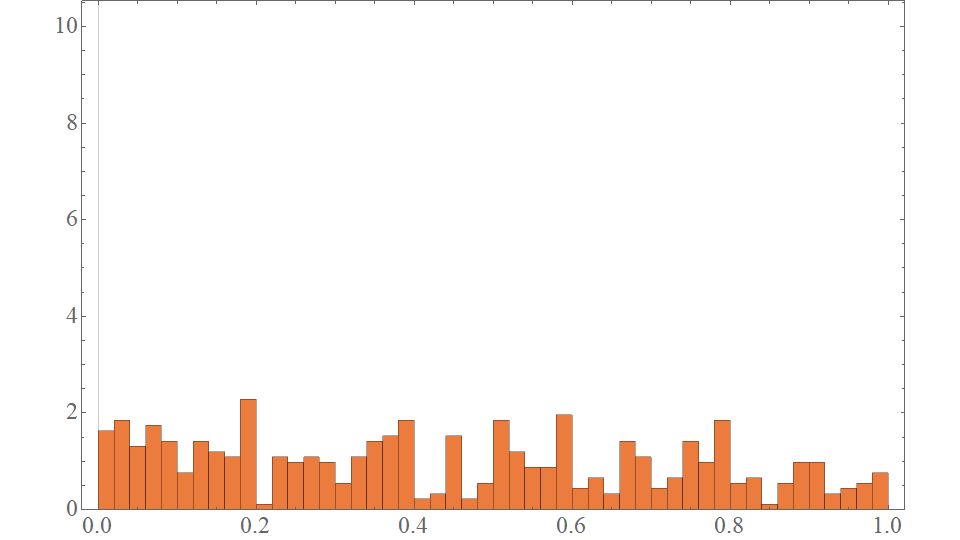}\noindent\includegraphics[width=0.5\textwidth]{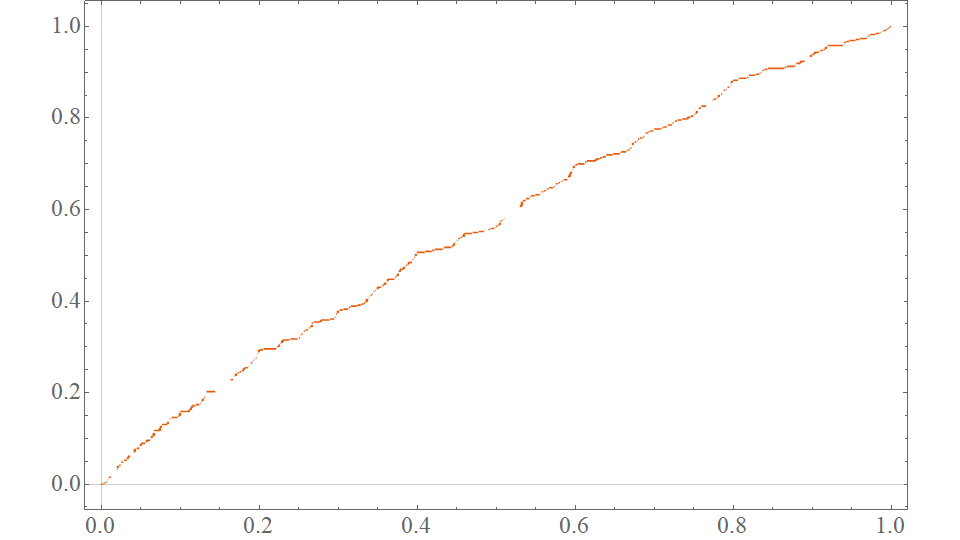}\\9. The histogram and CDF for the orbit of $\frac{259}{108335}$. The orbit has 460 points.\smallskip\hrule\smallskip  \includegraphics[width=0.5\textwidth]{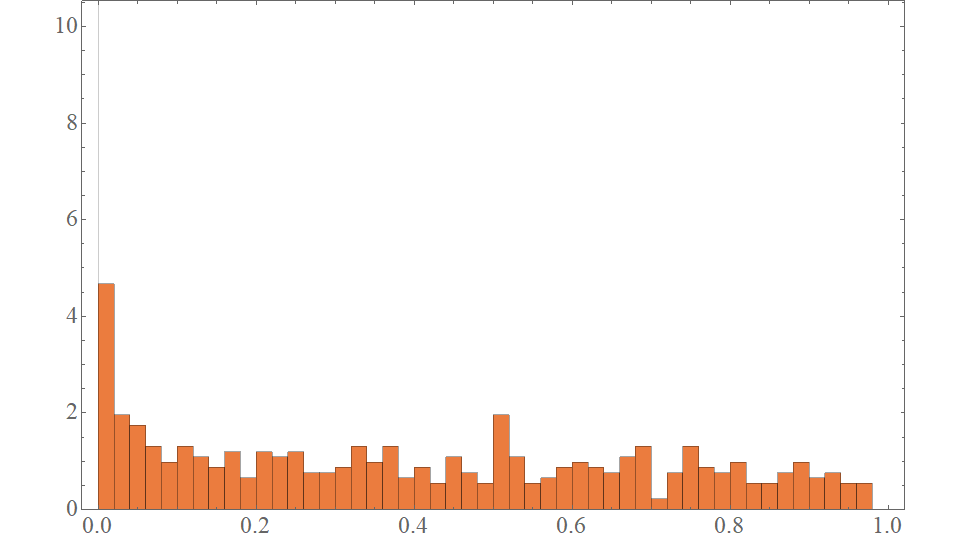}\noindent\includegraphics[width=0.5\textwidth]{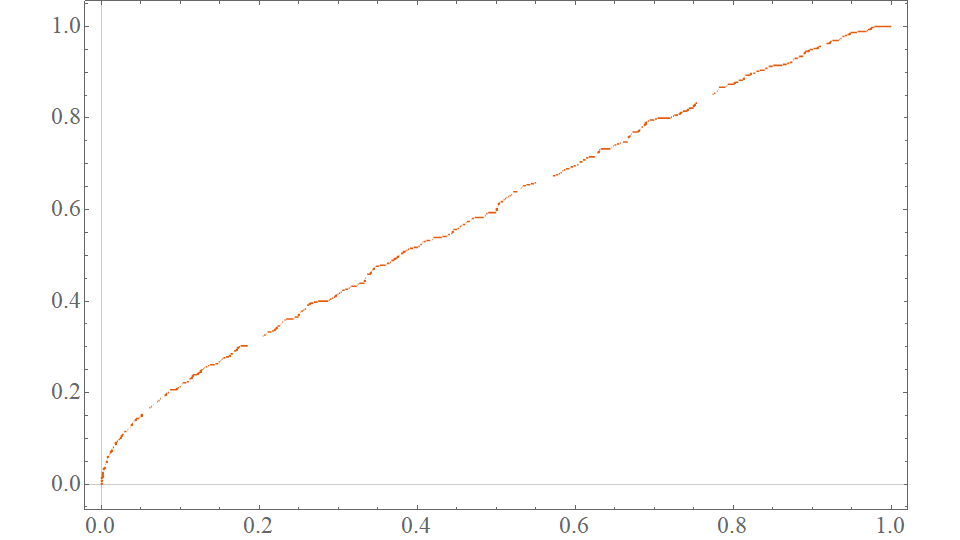}\\10. The histogram and CDF for the orbit of $\frac{29}{108335}$. The orbit has 460 points.\smallskip\hrule\smallskip \includegraphics[width=0.5\textwidth]{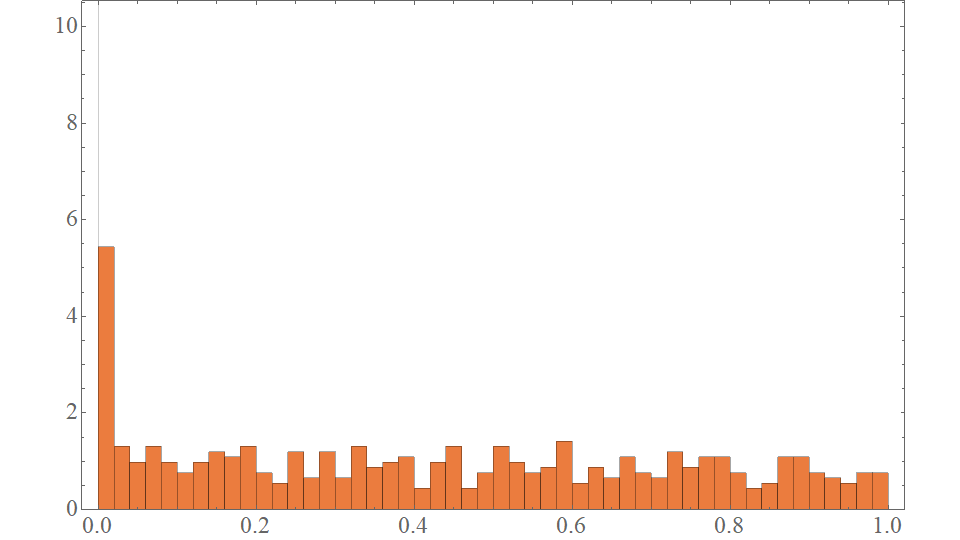}\noindent\includegraphics[width=0.5\textwidth]{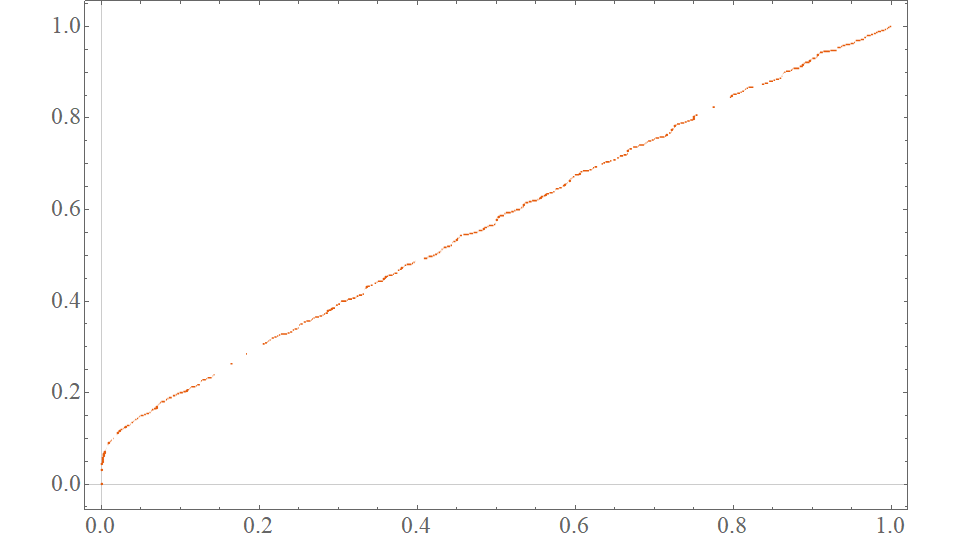}\\11. The histogram and CDF for the orbit of $\frac{1}{108335}$. The orbit has 460 points.\smallskip\hrule\smallskip \includegraphics[width=0.5\textwidth]{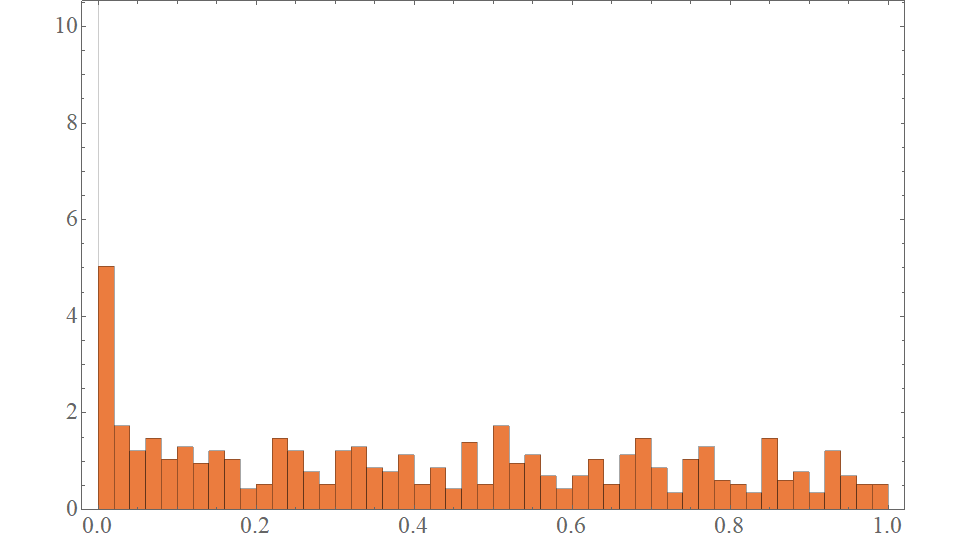}\noindent\includegraphics[width=0.5\textwidth]{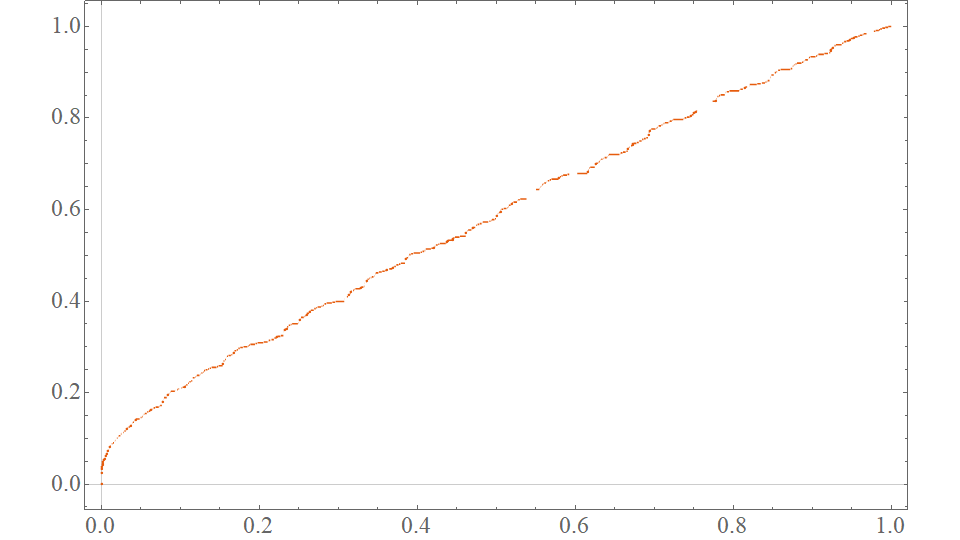}\\12. The histogram and CDF for the orbit of $\frac{1}{119795}$. The orbit has 576 points.\smallskip\hrule\smallskip \includegraphics[width=0.5\textwidth]{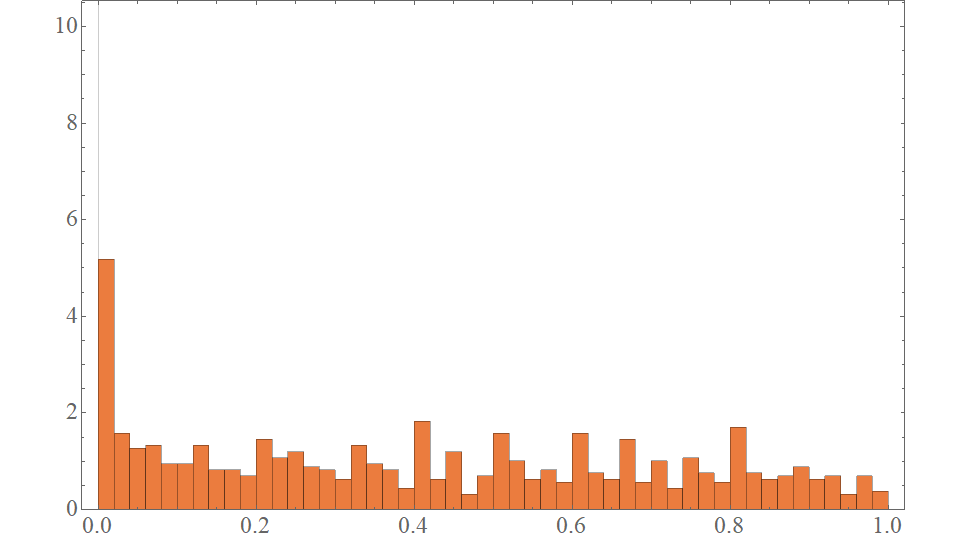}\noindent\includegraphics[width=0.5\textwidth]{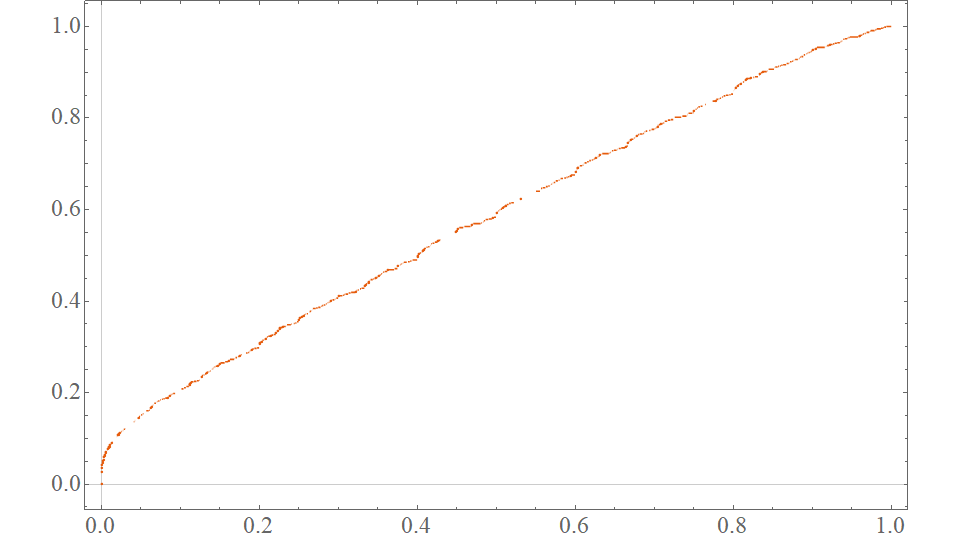}\\13. The histogram and CDF for the orbit of $\frac{1}{434815}$. The orbit has 792 points.\smallskip\hrule\smallskip \includegraphics[width=0.5\textwidth]{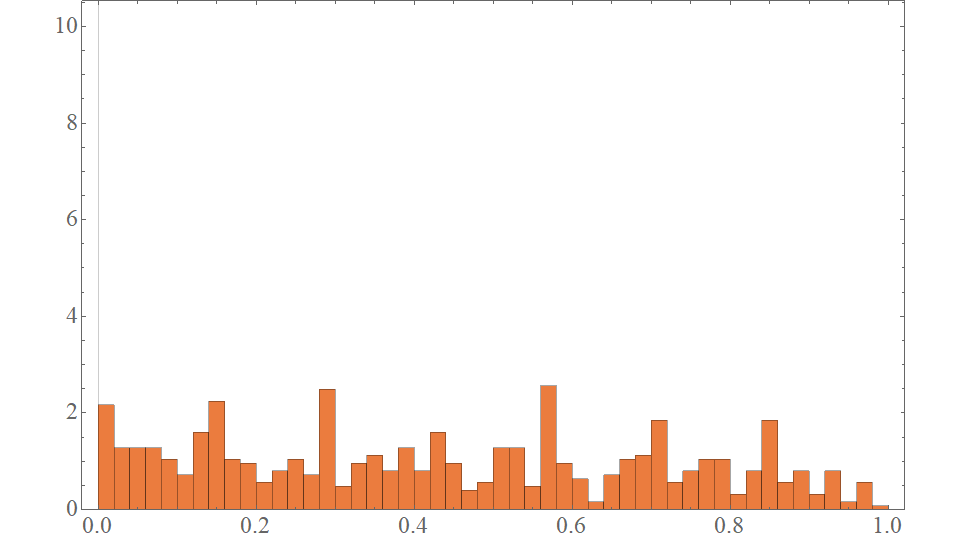}\noindent\includegraphics[width=0.5\textwidth]{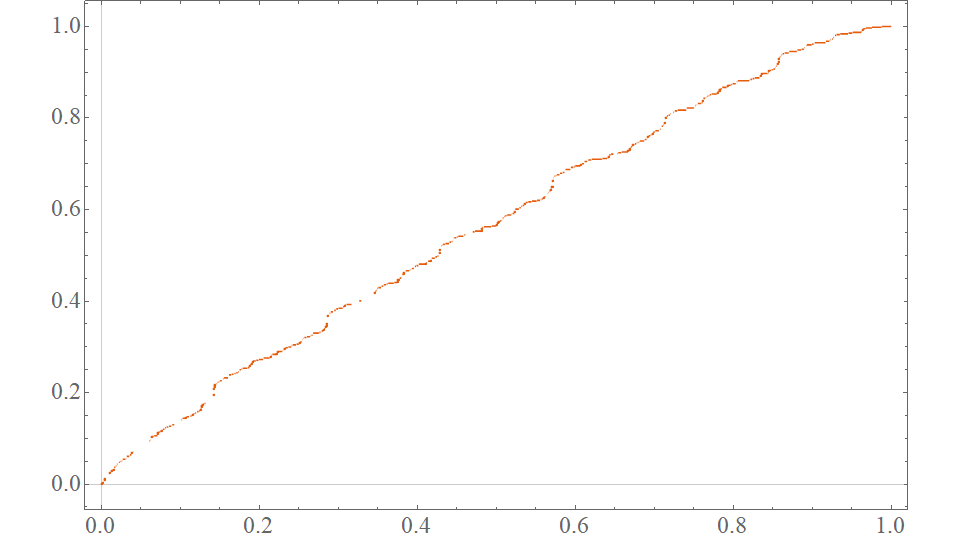}\\14. The histogram and CDF for the orbit of $\frac{289}{580615}$. The orbit has 624 points.\smallskip\hrule\smallskip \includegraphics[width=0.5\textwidth]{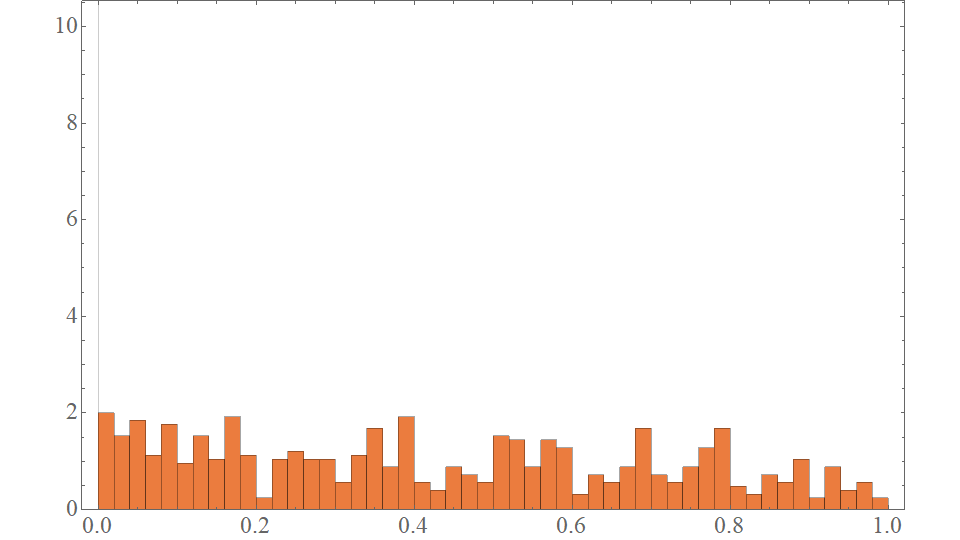}\noindent\includegraphics[width=0.5\textwidth]{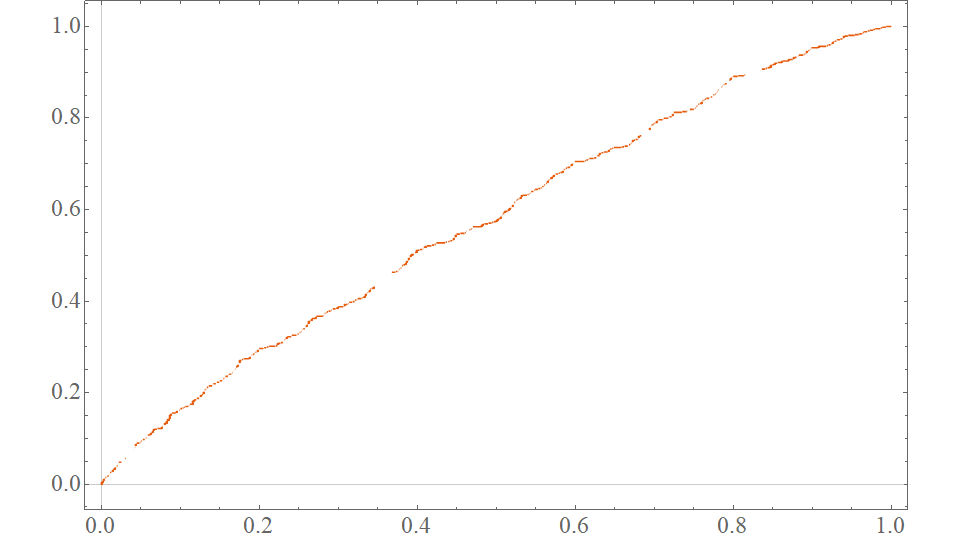}\\15. The histogram and CDF for the orbit of $\frac{233}{580615}$. The orbit has 624 points.\smallskip\hrule\smallskip \includegraphics[width=0.5\textwidth]{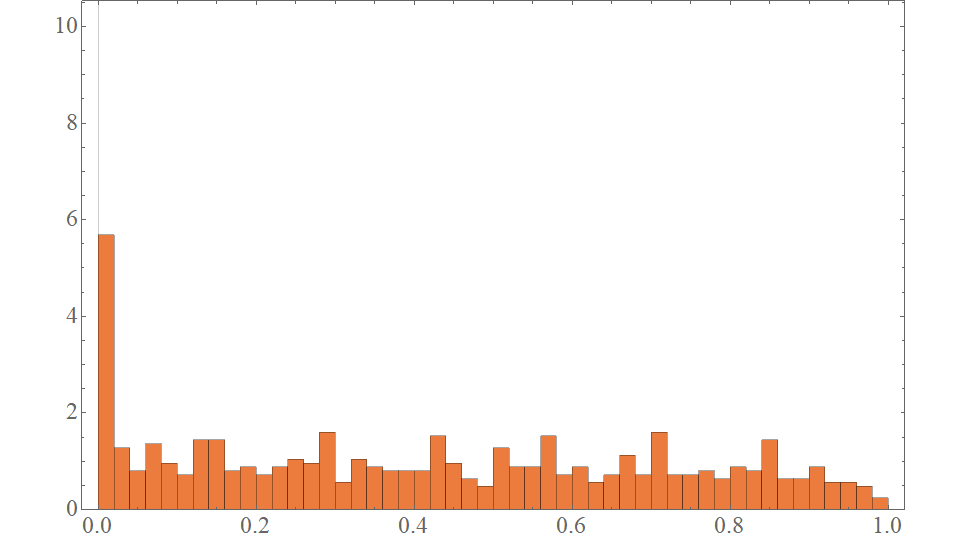}\noindent\includegraphics[width=0.5\textwidth]{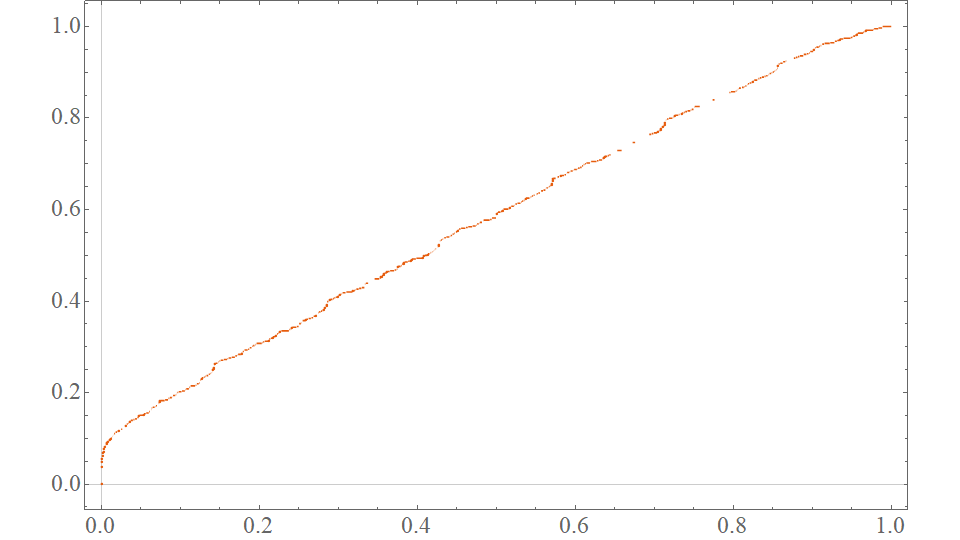}\\16. The histogram and CDF for the orbit of $\frac{1}{580615}$. The orbit has 624 points.\smallskip\hrule\smallskip \label{moreorbits_end}
\pagebreak
Note that many of the outlier orbits have a significant concentration of mass near $0$, which suggests the possibility of converging to a measure with an atom at $0$. Furthermore, several outlier orbits have concentrations of mass near elements of short periodic orbits: the orbit of $\frac{271}{86963}$ in figure 7 above has visible concentrations around the points $\frac{1}{5},\frac{2}{5},\frac{3}{5},$ and $\frac{4}{5}$. Furthermore, in accordance with the reasoning in the proof of theorem \ref{fct:atom_at_one}, applying the transformation $x\mapsto 5x\mod 1$ to the orbit in figure 7 yields an orbit with a concentration of mass near $0$ --- in this case it is specifically the orbit of the point $\frac{1}{86963}$ shown in figure $8$. A similar example is shown in figure $14$, where the orbit has concentrations of mass near points belonging to the orbit of $\frac{1}{7}$. Again, applying the transformation $x\mapsto 7x\mod 1$ to the orbit in figure $14$ ``moves'' these concentrations to the vicinity of $0$, however in this case the resulting orbit is the orbit of $\frac{1}{82945}$ whose atomic measure is not far enough from the Lebesgue measure $\lambda$ (the distance is $\frac{1986949}{25878840}\approx 0.077$) to consider it one of the outliers by our criteria. The decrease in distance may appear surprising, since the new orbit should have a concentration of mass near $0$: this concentration does in fact appear, but the orbit of $\frac{1}{82945}$, unlike that of $\frac{289}{580615}$, is symmetric around $\frac{1}{2}$, and as a result the distance between its atomic measure and $\lambda$ is smaller, as seen in the following images:
\smallskip\hrule\smallskip\noindent\includegraphics[width=0.5\textwidth]{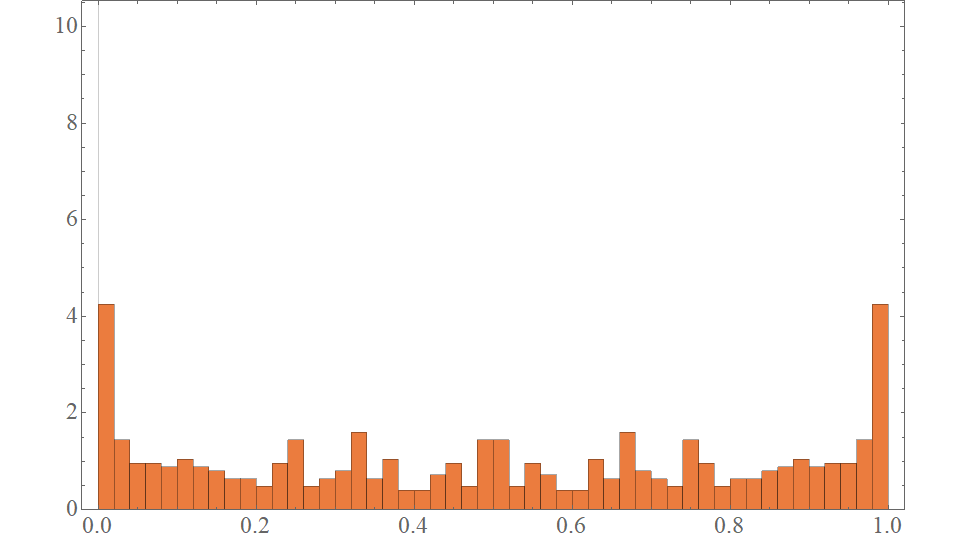} \includegraphics[width=0.5\textwidth]{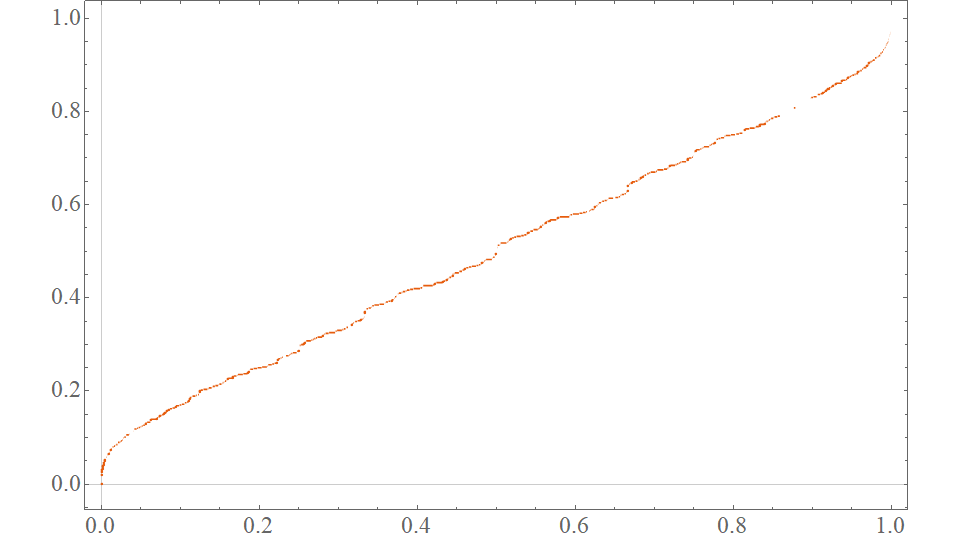}\\The histogram and CDF for the orbit of $\frac{1}{82945}$.\smallskip\hrule\smallskip

\subsection{Another way to view the orbits}
For each of our $32$ chosen outlier orbits we also generated two-dimensional bitmap images based on the orbit of a point $x$ closest to $0$ in the orbit. For a given $x$, the first row of the bitmap is the binary expansion of $x$ (with $0$ corresponding to a black pixel, and $1$ corresponding to a white pixel), the second row is the binary expansion of $3x$, and so on. In other words, for a given starting point $x$ (which we always took to be the smallest element of a given orbit), the pixel in the $i$-th row and $j-$th column (counting from the top left corner) is colored white or black depending on whether the point $3^i2^jx \mod 1$ is smaller or greater than $\frac{1}{2}$, respectively. In the resulting binay representation, the transformation $x\mapsto 2x \mod 1$ corresponds to the horizontal left shift, while $x\mapsto 3x\mod 1$ is the vertical upward shift. We generated bitmaps of size $128\times 128$ pixels: each such bitmap consists of $16,384$ pixels, which is far more than the cardinality of any orbit we consider, hence all the images exhibit various kinds of periodicity. We begin with a side-by-side comparison of the image corresponding to the orbit of $\frac{271}{86963}$ (which had concentrations of mass around the orbit of $\frac{1}{5}$) and the image corresponding to the orbit of $\frac{1}{5}$:

\begin{center}
\includegraphics[height=0.5\textwidth]{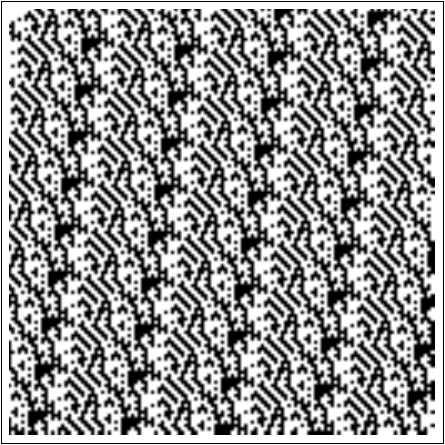}\includegraphics[height=0.5\textwidth]{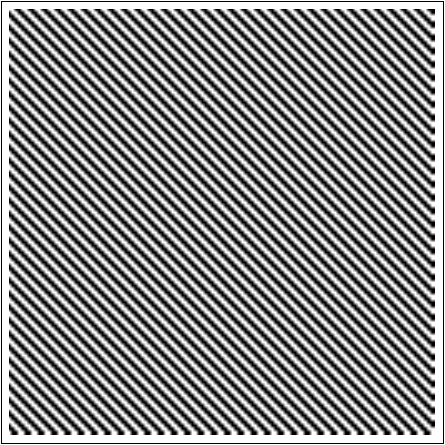}
\end{center}
 Observe that the left image has areas where the pattern is exactly the same as in the right image, which corresponds to the fact that the orbit of $\frac{271}{86963}$ at certain times closely shadows the orbit of $\frac{1}{5}$.
A similar phenomenon can be observed by comparing the symbolic visualizations of the orbit of $\frac{289}{580615}$ and the orbit of $\frac{1}{7}$:\\
\includegraphics[width=0.5\textwidth]{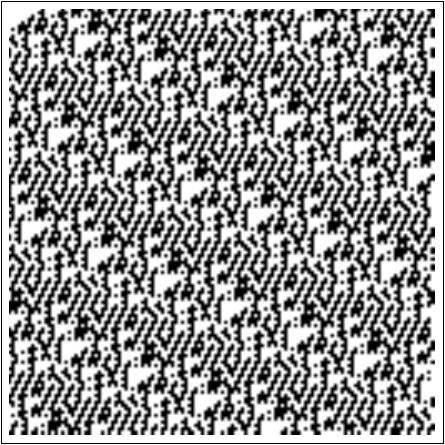} \includegraphics[width=0.5\textwidth]{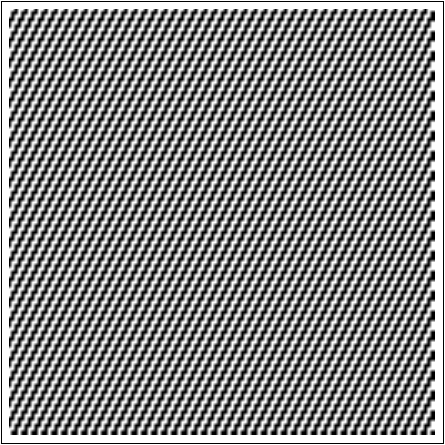}\\
Again, the more complex pattern corresponding to the longer orbit includes sections where it is identical to the pattern corresponding to the shorter orbit, i.e. the orbit of $\frac{233}{580615}$ at times shadows the orbit of $\frac{1}{7}$. 
Below we present the bitmap visualizations of several other outlier orbits:\\
\includegraphics[width=0.5\textwidth]{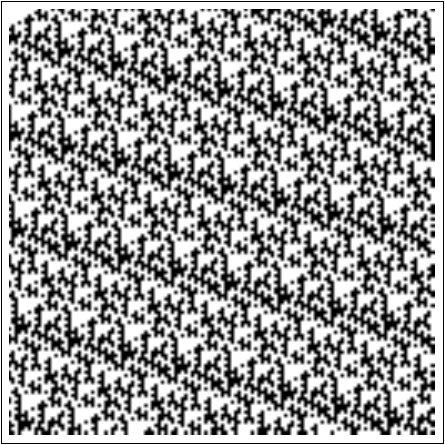}\includegraphics[width=0.5\textwidth]{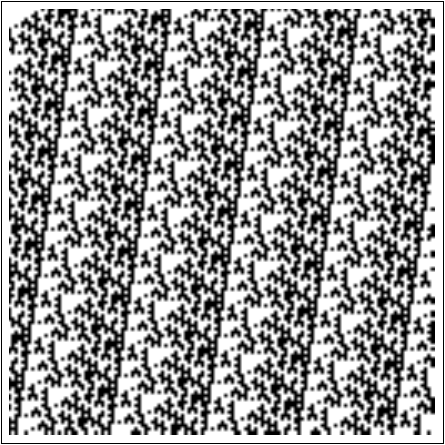}\\
Symbolic visualisations of the orbits of $\frac{23}{21667}$ and $\frac{61}{86963}$.\smallskip\hrule\smallskip\noindent
\includegraphics[width=0.5\textwidth]{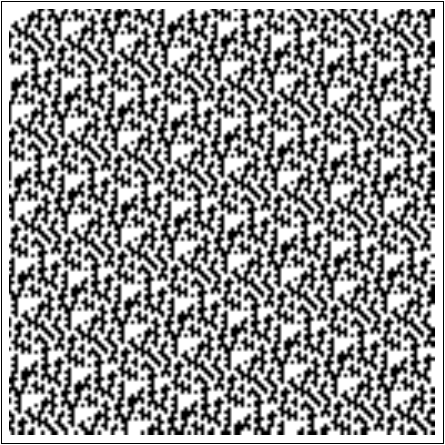}\includegraphics[width=0.5\textwidth]{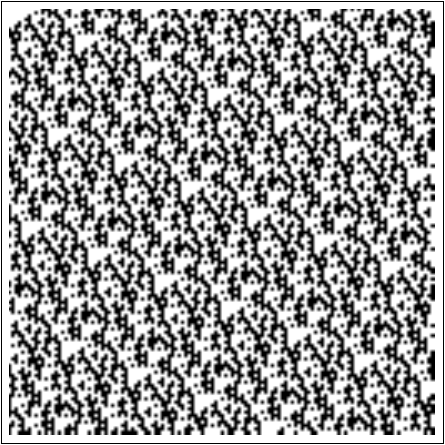}\\
Symbolic visualisations of the orbits of $\frac{179}{108335}$ and $\frac{103}{116123}$.\smallskip\hrule\smallskip\noindent
\includegraphics[width=0.5\textwidth]{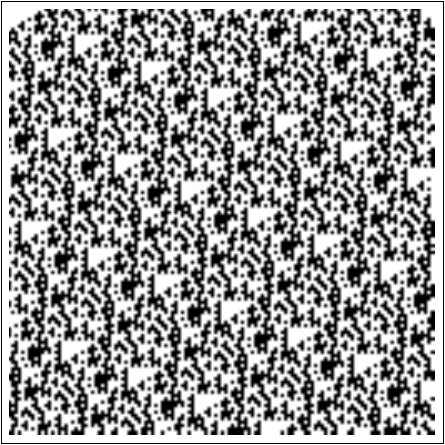}\includegraphics[width=0.5\textwidth]{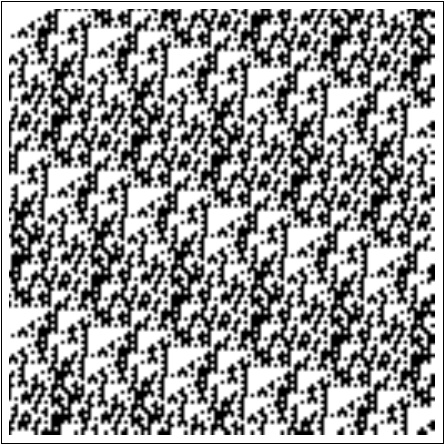}
Symbolic visualisations of the orbits of $\frac{233}{580615}$ and $\frac{31}{609427}$.\smallskip\hrule\smallskip
Since we specifically chose the outlier orbits which have more points in the left half of $[0,1]$, the images have more white than black pixels. Furthermore, the triangular white areas correspond to times where $2^j3^jx\mod 1$ remains in the left half of $[0,1]$ for several consecutive values of $i$ and $j$. This is only possible when the orbit includes points very close to $0$, which corresponds to the atomic measure having a concentration of mass near $0$. Observe that images with larger area of white triangles correspond to orbits with larger concentrations of mass near $0$: if we denote by $x$ the starting point of the orbit, then a white triangle with the upper left corner in row $n$ and column $m$, whose horizontal side has $a$ pixels and vertical side has $b$ pixels, occurs when the point $y=3^n2^mx \mod 1$ is so close to $0$ that $2y,4y,\ldots,2^ay$ are all in $[0,\frac{1}{2}]$, and the same is true of $3y,9y,\ldots,3^by$, which implies that the same is also true for numbers of the form $2^i3^jy$ for $i$ and $j$ up to certain values. Also observe that in fact every row of the image is determined by the preceding one, since row $i$ of the image can be interpreted as the fractional part of the binary expansion of $3^{i-1}x\mod 1$, where $x$ is the starting point of the orbit. It follows that every row can be used to determine the next one by adding it to its own copy shifted to the left by one pixel (this process corresponds to adding $x$ to $2x$ modulo 1, thus producing $3x\mod 1$). 

Finally, we present two other bitmap visualizations: our longest outlier orbit, that of $\frac{31}{609427}$ (left), and the orbit of $\frac{1}{580615}$ (right).\\\\
\label{fig:SymbolicLongest}\includegraphics[width=0.5\textwidth]{symImage032.pdf}
\label{fig:SymbolicLongest}\includegraphics[width=0.5\textwidth]{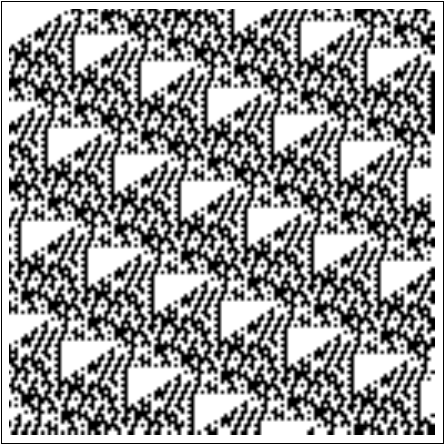}
\smallskip
Since the latter orbit includes the point $\frac{1}{580615}$, the presence of large white triangles is not surprising. Also observe repeated occurrences of the pattern from the bitmap visualization of the orbit of $\frac{1}{7}$, and note that the histogram of this orbit on page \pageref{moreorbits_end} also exhibits concentrations of mass near the points $\frac{1}{7},\ldots,\frac{6}{7}$. In terms of measures this can be seen as indicating the fact that we may hope to find a sequence of atomic measures converging to a measure whose ergodic decomposition will include the atomic measures corresponding to the fixed point $0$ and the periodic orbit of $\frac{1}{7}$.

We shall now explain the presence of ``shadow concentrations'' of mass around $\frac{1}{2}, \frac{1}{3}$ and $\frac23$, whenever a concentration near zero occurs. The point $x$ closest to zero in the orbit generates a white triangle with vertical and horizontal dimensions $a$ and $b$, respectively. Since $a$ is determined as the minimal solution of the inequality $3^ax>\frac12$ and similarly, $b$ is the minimal solution of the inequality $2^bx>\frac12$, there is an approximate relation $b=a\frac{\ln3}{\ln2}$. The area of the white triangle is
$M=a^2\frac{\ln3}{2\ln2}$ which represents the number of units of (two-dimensional) time spent by the orbit near zero. The left margin of the triangle consists of black pixels, each followed to the right by several white pixels. This corresponds to the binary expansion of points near $\frac12$. So, the orbit spends $a$ units of time near $\frac12$. The top margin of the triangle is always a periodic pattern with alternating black and white pixels, corresponding to binary expansions of points close to, alternately, $\frac13$ and $\frac23$. This explains the shadow concentrations with proportions $b=\sqrt{M\frac{2\ln2}{\ln3}}$ points close to $\frac12$ and $\frac a2=\frac12\sqrt{M\frac{2\ln3}{\ln2}}$ points close to each of the points $\frac13$ and $\frac23$. Arguing similarly, we can derive ``second order'' shadow concentrations near $\frac14, \frac34$ and likewise $\frac19, \frac29,\frac49,\frac59,\frac79,\frac89$. Because no invariant atomic measure has atoms with denominators of the form $2^i3^j$, these shadow concentrations must vanish relatively to the length $n$ of the orbit as $n$ increases, and apparently they do.
\section{Acknowledgments}
Research of both authors is supported from:
\begin{itemize}
    \item Resources for science in years
    2013-2018 as research project (NCN grant 2013/08/A/ST1/00275, Poland
    \item Statutory research funds of Faculty of Pure and Applied Mathematics at Wroc\l aw University of Science and Technology
\end{itemize}


\begin{thebibliography}{}
\bibitem{F} H. Furstenberg, \emph{Disjointness in ergodic theory, minimal sets, and a problem in diophantine approximation}, Math. Sys. Theory 1 (1967), 1-4
\bibitem{R} D. Rudolph, \emph{2 and 3 invariant measures and entropy}, Ergodic Theory Dynam. Systems, 10(2) (1990), 395-406

\end{thebibliography}
\end{document}